\newcommand{\cover}{\lessdot}
\newcommand{\ree}[1]{(\ref{#1})}
\newtheorem{thm}{Theorem}
\newtheorem{lem}[thm]{Lemma}
\newtheorem{cor}[thm]{Corollary}
\newtheorem{prop}[thm]{Proposition}
\theoremstyle{definition}
\newtheorem{defi}[thm]{Definition}
\newtheorem{example}[thm]{Example}
\DeclareMathOperator{\supp}{supp}
\newcommand{\bfm}[1]{\boldsymbol{#1}}
\begin{document}
\title{Factoring the characteristic polynomial of a lattice}
\author{Joshua Hallam\\ Bruce E. Sagan\\Department of Mathematics\\Michigan State University
\\ East Lansing, MI 48824-1027, USA}
\date{\today\\[10pt]
	\begin{flushleft}
	\small Key Words: characterstic polynomial, factorization, increasing forest, lattice, M\"obius function, quotient, semimodular, supersolvable
	                                       \\[5pt]
	\small AMS subject classification (2010):  06A07 (Primary), 05A15 (Secondary)
	\end{flushleft}}

\maketitle

\begin{abstract}
We introduce a new method for showing that the roots of the characteristic polynomial of  certain finite lattices are all nonnegative integers. This method is based on the  notion of a quotient of a  poset which will be developed to explain this factorization.  Our main theorem will give two simple conditions under which the characteristic polynomial factors with nonnegative integer roots. We will see that Stanley's Supersolvability Theorem is a corollary of this result. Additionally, we will  prove a theorem which gives three conditions equivalent to factorization. To our knowledge, all other theorems in this area only give conditions which imply factorization. This theorem  will be used to  connect the generating function for increasing spanning forests of a graph to its chromatic polynomial. We finish by mentioning some other applications of quotients of posets as well as some open questions.

\end{abstract}

\section{Introduction}

For the entirety of this paper let us assume that all our partially ordered sets (posets)  are finite, ranked, and contain a unique minimal element, denoted $\hat{0}$. Recall the one-variable M\"{o}bius function of a poset, $\mu: P \rightarrow \mathbb{Z} $, is defined recursively by 
$$
\sum_{y\leq x} \mu(y) =\delta_{\hat{0},x}
$$
where $\delta_{\hat{0},x}$is the Kronecker delta. 

Also, recall that a poset $P$, is \emph{ranked} if, for each $x\in P$, every saturated $\hat{0}$--$x$ chain has the same length. Given a ranked poset, we get a \emph{rank function} $\rho:P \rightarrow \mathbb{N} $ defined by setting $\rho(x)$ to be the length of a $\hat{0}$--$x$ saturated chain. 
We define the \emph{rank} of a ranked poset $P$ to be
$$
\rho(P) = \max_{x\in P} \rho(x).
$$
When $P$ is ranked, the generating function for $\mu$ is called the \emph{characteristic polynomial} and is given by 
$$
\chi(P,t) = \sum_{x\in P} \mu(x) t^{\rho(P)- \rho(x)}.
$$
We are interested in identifying lattices which have characteristic polynomials with only nonnegative integer roots. In this case, we also wish to show that the roots are the cardinalities of sets of atoms of the lattice.

Before we continue, let us mention some previous work done by others on the factorization of the characteristic polynomial. For a more complete overview, we suggest reading the survey paper by Sagan~\cite{s:wcpf}. In~\cite{s:sl}, Stanley showed that the characteristic polynomial of a semimodular supersolvable lattice always has nonnegative integer roots. Additionally, he showed these roots were given by the sizes of blocks in a partition of the atom set of the lattice. Blass and Sagan~\cite{bs:mfl} extended this result to LL lattices. In~\cite{z:sgc}, Zaslavsky generalized the concept of coloring of graphs to coloring of signed graphs and showed how these colorings were related to the characteristic polynomial of certain hyperplane arrangements. This permits one to factor characteristic polynomials using techniques for chromatic polynomials of signed graphs. Saito~\cite{s:tldflvf} and Terao~\cite{t:gefahstbf} studied a module of derivations associated with a hyperplane arrangement. When this module is free, the characteristic polynomial has roots which are the degrees of its basis elements.

Our method for factoring the characteristic polynomial is based on two simple results given in the next well-known lemma.

\begin{lem} \label{chiLem}
Let $P$ and $Q$ be posets. Then we have the following.
\begin{enumerate}
\item $\chi(P\times Q,t) = \chi(P,t)\chi(Q,t)$.
\item If $P\cong Q$, then $\chi(P,t) = \chi(Q,t)$. 
\end{enumerate}
\end{lem}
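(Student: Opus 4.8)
The plan is to reduce everything to the behavior of the rank function and the M\"obius function under products and isomorphisms. For part (1), first I would check that $P\times Q$ is again ranked with least element $\hat 0_{P\times Q}=(\hat 0_P,\hat 0_Q)$: a saturated $\hat 0$--$(x,y)$ chain is built by moving up one cover at a time in either the $P$--coordinate or the $Q$--coordinate, so its length is $\rho_P(x)+\rho_Q(y)$ no matter how the two kinds of steps are interleaved. Hence $\rho_{P\times Q}(x,y)=\rho_P(x)+\rho_Q(y)$, and taking the maximum over all $(x,y)$ gives $\rho(P\times Q)=\rho(P)+\rho(Q)$.

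Next I would establish multiplicativity of the M\"obius function, $\mu_{P\times Q}(x,y)=\mu_P(x)\mu_Q(y)$, via uniqueness of the solution to the defining recursion. Setting $g(x,y)=\mu_P(x)\mu_Q(y)$ and summing over the down-set of $(x,y)$ in $P\times Q$, which is exactly the product of the down-sets of $x$ and of $y$, one gets
$$
\sum_{(x',y')\le(x,y)} g(x',y') = \Bigl(\sum_{x'\le x}\mu_P(x')\Bigr)\Bigl(\sum_{y'\le y}\mu_Q(y')\Bigr) = \delta_{\hat 0_P,x}\,\delta_{\hat 0_Q,y} = \delta_{\hat 0_{P\times Q},(x,y)},
$$
so $g$ satisfies the same recursion as $\mu_{P\times Q}$ and therefore coincides with it. With these two facts in hand, part (1) is a direct computation: substituting into the definition of $\chi$ and using that a sum over $P\times Q$ factors as a product of a sum over $P$ and a sum over $Q$,
$$
\chi(P\times Q,t) = \sum_{(x,y)} \mu_P(x)\mu_Q(y)\, t^{(\rho(P)-\rho_P(x))+(\rho(Q)-\rho_Q(y))} = \chi(P,t)\,\chi(Q,t).
$$

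For part (2), let $\phi\colon P\to Q$ be a poset isomorphism. Being an order isomorphism, $\phi$ sends $\hat 0_P$ to $\hat 0_Q$ and induces a bijection between saturated $\hat 0$--$x$ chains in $P$ and saturated $\hat 0$--$\phi(x)$ chains in $Q$, whence $\rho_Q(\phi(x))=\rho_P(x)$ and $\rho(Q)=\rho(P)$; similarly, comparing the M\"obius recursions on the isomorphic down-sets of $x$ and of $\phi(x)$ (again by uniqueness, or by a short induction on rank) gives $\mu_Q(\phi(x))=\mu_P(x)$. Re-indexing the sum defining $\chi(Q,t)$ along the bijection $\phi$ then yields $\chi(Q,t)=\chi(P,t)$.

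None of the steps is a genuine obstacle --- which is exactly why the lemma is labeled well-known --- but the one place to take care is the bookkeeping in part (1): one must confirm that $P\times Q$ is actually ranked, so that $\chi(P\times Q,t)$ is even defined, and that the exponent $\rho(P\times Q)-\rho_{P\times Q}(x,y)$ splits additively, since it is precisely this additive splitting, combined with the multiplicativity of $\mu$, that lets the double sum factor into a product.
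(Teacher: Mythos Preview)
Your proof is correct. The paper itself does not prove this lemma at all---it is introduced as ``well-known'' and stated without proof---so there is no approach to compare against; your argument supplies exactly the standard details (additivity of rank and multiplicativity of $\mu$ on products, invariance under isomorphism) that the authors are tacitly invoking.
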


Now let us investigate a family of lattices whose characteristic polynomials have only nonnegative integer roots. We will often refer back to this example in the sequel. The \emph{partition lattice}, $\Pi_n$, is the lattice whose elements are the set partitions $\pi=B_1/\dots/B_k$ of $\{1,2,\dots, n\}$ under the refinement ordering. The subsets $B_i$ in a partition are called {\em blocks}. It is well-known that in this case the characteristic polynomial is given by 
\begin{displaymath}
\chi(\Pi_n, t) = (t-1)(t-2)\cdots (t-n+1).
\end{displaymath}

Note that the characteristic polynomial of the partition lattice can be written as the product of linear factors whose roots are in $\mathbb{Z}_{\ge0}$. Motivated by this fact, we consider a family of posets each having a single linear factor as its characteristic polynomial.

\begin{defi}
The \emph{claw} with $n$ atoms is the poset with a $\hat{0}$, $n$ atoms and no other elements. It will be denoted $CL_n$ and is the poset which has Hasse diagram depicted in Figure~\ref{clawFig}. Clearly,
\begin{displaymath}
\chi(CL_n,t) = t-n.
\end{displaymath}

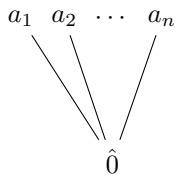
\begin{figure}
\begin{center}
\begin{tikzpicture}
\tikzstyle{elt}=[rectangle]
\matrix{
\node(1)[elt]{$a_1$};& \node(2)[elt]{$a_2$};&\node(i)[elt]{$\cdots$};&;&\node(n)[elt]{$a_n$};&\
\\[40pt]
&&;\node(0)[elt]{$\hat{0}$};&;\\
};
\draw(0)--(1);
\draw(0)--(2);
\draw(0)--(n);
\end{tikzpicture}
\end{center}
\caption{Claw with $n$ atoms}\label{clawFig}
\end{figure}
\end{defi}

Now let us look at the special case of $\Pi_3$. We wish to show that
\begin{displaymath}
\chi(\Pi_3, t) = (t-1)(t-2).
\end{displaymath}
Since the roots of $\chi(\Pi_3,t)$ are $1$ and $2$, we consider $CL_1\times CL_2$ which, by the first part of Lemma~\ref{chiLem}, has the same characteristic polynomial. Unfortunately, 
these two posets are not isomorphic since one contains a maximum element and the other does not. We now wish to modify $CL_1\times CL_2$ without changing its characteristic polynomial and in such a way that the resulting poset will be isomorphic to $\Pi_3$. It will then follow from the second part of Lemma~\ref{chiLem} that 
\begin{displaymath}
\chi(\Pi_3,t)=\chi(CL_1\times CL_2) = (t-1)(t-2).
\end{displaymath}
Let $CL_1$ have its atom labeled by $a$ and let $CL_2$ have its two atoms labeled by $b$ and $c$. Now suppose that we identify $(a,b)$ and $(a,c)$ in $CL_1\times CL_2$ and call this new element $d$. After this collapse, we get a poset isomorphic to $\Pi_3$ as can be seen in Figure~\ref{pi3Fig}. Note that performing this collapse did not change the characteristic polynomial since $\mu(d) = \mu((a,b)) +\mu((a,c))$ and $\rho(d) = \rho((a,b))=\rho((a,c))$. Thus we have fulfilled our goal.

\begin{figure}[ht]
\begin{center}
\begin{tikzpicture}
\tikzstyle{elt}=[rectangle]
\matrix{
&\node(4)[elt]{$123$};&\\\\[20pt] 
\node(1)[elt]{$12/3$};& \node(2)[elt]{$13/2$};&\node(3)[elt]{$1/23$};&\\\\[20pt]
&;\node(0)[elt]{$1/2/3$};&;\\
&;\node(space){\vspace{1in}};&;\\
&;\node(text1){$\Pi_3$};&;\\
};
\draw(0)--(1);
\draw(0)--(2);
\draw(0)--(3);
\draw(1)--(4);
\draw(2)--(4);
\draw(3)--(4);
\end{tikzpicture}
\hspace{.5 in}
\begin{tikzpicture}
\tikzstyle{elt}=[rectangle]
\matrix{
\node(ab){$(a,b)$};&&\node(ac){$(a,c)$};\\\\[20pt] 
\node(b0){$(\hat{0},b)$}; &\node(a0){$(a,\hat{0})$}; &;\node(c0){$(\hat{0},c)$};\\\\[20pt]
&;\node(0)[elt]{$(\hat{0},\hat{0})$};&;\\
&;\node(space){\vspace{1in}};&;\\
&;\node(text1){$CL_1\times CL_2$};&;\\
};
\draw(0)--(a0);
\draw(0)--(b0);
\draw(0)--(c0);
\draw(a0)--(ab);
\draw(a0)--(ac);
\draw(b0)--(ab);
\draw(c0)--(ac);
\end{tikzpicture}
\hspace{.5 in}
\begin{tikzpicture}
\tikzstyle{elt}=[rectangle]
\matrix{
&\node(ab){$(a,b)\sim (a,c)=d$};&\\\\[20pt] 
\node(b0){$(\hat{0},b)$}; &\node(a0){$(a,\hat{0})$}; &;\node(c0){$(\hat{0},c)$};\\\\[20pt]
&;\node(0)[elt]{$(\hat{0},\hat{0})$};&;\\
&;\node(space){\vspace{1in}};&;\\
&;\node(text1){$CL_1\times CL_2$ after identifying $(a,b)$ and $(a,c)$ };&;\\
};
\draw(0)--(a0);
\draw(0)--(b0);
\draw(0)--(c0);
\draw(a0)--(ab);
\draw(a0)--(ab);
\draw(b0)--(ab);
\draw(c0)--(ab);
\end{tikzpicture}

\caption{Hasse diagrams for the partition lattice example}\label{pi3Fig}
\end{center}
\end{figure}
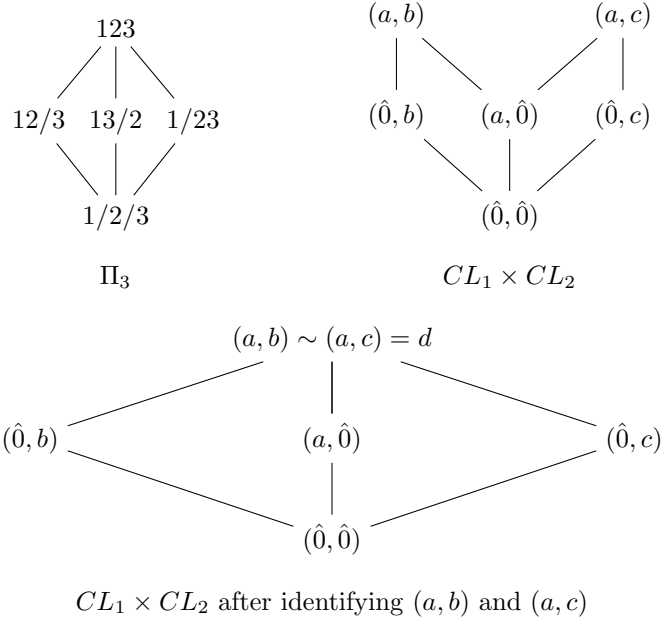

It turns out that we can use this technique of collapsing elements to find the roots of a characteristic polynomial in a wide array of posets, $P$. The basic idea is that it is trivial to calculate the characteristic polynomial of a product of claws. Moreover, under certain conditions which we will see later, we are able to identify elements of the product and form a new poset without changing the characteristic polynomial. If we can show the product with identifications made is isomorphic to $P$, then we will have succeeded in showing that $\chi(P,t)$ has only nonnegative integer roots.

In the next section, we formally define what it means to identify elements of a poset $P$ as well as give conditions under which making these identifications will not change the characteristic polynomial. In Section~\ref{ser}, we discuss a canonical way to put an equivalence relation on $P$ when it is a lattice and give three simple conditions which together imply that $\chi(P,t)$ has nonnegative integral roots. Section~\ref{rt} contains a generalization of the notion of a claw. This enables us to remove one of the conditions needed to prove factorization and we obtain our main result, Theorem~\ref{bigThm}. Section~\ref{pic} is concerned with partitions of the atom set of $P$ induced by a multichain. With one extra assumption, this permits us to give three conditions which are equivalent to $\chi(P,t)$ having the sizes of the blocks of the partition as roots; see Theorem~\ref{equivPropFactorSemimod}. This result will imply Stanley's Supersolvability Theorem~\cite{s:sl}. In section~\ref{agt} we will use Theorem~\ref{equivPropFactorSemimod} to prove a new theorem about the generating function for increasing spanning forests of a graph. We end with a section about open questions and future work.

\section{Quotients of Posets}
We begin this section by defining, in a rigorous way, what we mean by collapsing elements in a Hasse diagram of a poset. We do so by putting an equivalence relation on the poset and then ordering the equivalence classes. 

\begin{defi}\label{quoPosetDef}
Let $P$ be a poset and let $\sim$ be an equivalence relation on $P$. We define the {\em quotient} $P/\sim$ to be the set of equivalence classes with the binary relation $\leq$ defined by $X\leq Y$ in $P/\sim$ if and only if $x\leq y$ in $P$ for some $x\in X$ and some $y\in Y$. 
\end{defi}
 
 Note that this binary relation on $P/\sim$ is reflexive, although the antisymmetry and transitivity laws need not hold. For example let $P$ be the poset with chains $\hat{0}<x<y$ and $\hat{0}<w<z$ and no other relations. First, suppose that $A= \{w, x\}$ and $B=\{\hat{0},y, z\}$.  Then $A\leq B$ since $w<z$ and $B\leq A$ since $\hat{0}<w$.  However, $A\neq B$ and so the relation is not antisymmetric.  To see why it is not always transitive,  let $A=\{x\}$, $B=\{w,y\}$ and $C=\{z\}$.  Then $A\leq B$ since $x\leq y$ and $B\leq C$ since $w\leq z$, but $A\not\leq C$ since $x\not\leq z$.   Since we want the quotient to be a poset, it is necessary to require two more properties of our equivalence relation.

\begin{defi}\label{homogenQuoDefi}
Let $P$ be a poset and let $\sim$ be an equivalence relation on $P$. Order the equivalence classes as in the previous definition. We say the poset $P/\sim$ is a \emph{homogeneous quotient} if
\begin{itemize}
\item[(1)] $\hat{0}$ is in an equivalence class by itself, and 
\item[(2)] if $X\leq Y$ in $P/\sim$, then for all $x\in X$ there is a $y\in Y$ such that $x\leq y$.
\end{itemize}
\end{defi}

\begin{lem}
If $P$ is a poset and $P/\sim$ is a homogeneous quotient, then $P/\sim$ is a poset.
\end{lem}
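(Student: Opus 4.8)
The plan is to verify the three poset axioms — reflexivity, antisymmetry, and transitivity — for the relation $\leq$ on $P/\!\sim$, using the two homogeneity conditions. Reflexivity is immediate from the remark already made after Definition~\ref{quoPosetDef}, since for any class $X$ and any $x \in X$ we have $x \leq x$ in $P$. So the real content is antisymmetry and transitivity, and for both of these the key tool is condition (2): if $X \leq Y$ in $P/\!\sim$, then \emph{every} $x \in X$ lies below \emph{some} $y \in Y$. This upgrades the ``there exist'' definition of $\leq$ to a ``for all / there exists'' statement, which is exactly what is needed to chain inequalities together.

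For transitivity, suppose $X \leq Y$ and $Y \leq Z$. Pick any $x \in X$. By condition (2) applied to $X \leq Y$, there is $y \in Y$ with $x \leq y$ in $P$. By condition (2) applied to $Y \leq Z$, there is $z \in Z$ with $y \leq z$ in $P$. Then $x \leq z$ in $P$ by transitivity in $P$, so $X \leq Z$ by definition. For antisymmetry, suppose $X \leq Y$ and $Y \leq X$ with $X \neq Y$; I want a contradiction. The idea is to build an infinite strictly increasing chain in $P$, which is impossible since $P$ is finite. Start with any $x_0 \in X$. Using $X \leq Y$ and condition (2), get $y_0 \in Y$ with $x_0 \leq y_0$; using $Y \leq X$ and condition (2), get $x_1 \in X$ with $y_0 \leq x_1$; continue alternately. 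This produces a weakly increasing sequence $x_0 \leq y_0 \leq x_1 \leq y_1 \leq \cdots$ in $P$. Since $P$ is finite, the chain must stabilize, so some $x_i = y_i$ (or $y_i = x_{i+1}$), forcing an element of $P$ to lie in both $X$ and $Y$; since $\sim$ is an equivalence relation and $X,Y$ are equivalence classes, this gives $X = Y$, a contradiction. (One should also check that condition (1) guarantees $\hat 0$'s class behaves well, though that is mainly needed later for the characteristic polynomial, not for the poset axioms; I would still mention it is available.)

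The main obstacle I anticipate is the antisymmetry argument: unlike transitivity, which is a one-line application of condition (2), antisymmetry genuinely requires exploiting finiteness (or ranked-ness) of $P$, and one has to be careful that the alternating construction really does produce a chain that cannot go up forever while staying inside the two fixed classes. An alternative, possibly cleaner, route is: from $X \leq Y$ and condition (2), every element of $X$ is $\leq$ some element of $Y$; iterating $X \leq Y \leq X \leq Y \leq \cdots$ and using that $P$ has finite rank $\rho$, a chain $x_0 \leq y_0 \leq x_1 \leq \cdots$ with more than $\rho+1$ distinct terms is impossible, so two consecutive terms coincide, pinning down a common element of $X$ and $Y$. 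Either phrasing works; I would present whichever is shorter. After these three checks, $P/\!\sim$ satisfies all poset axioms and the lemma follows.
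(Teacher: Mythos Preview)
Your proposal is correct and follows essentially the same approach as the paper: reflexivity is immediate, transitivity is a one-line application of condition~(2), and antisymmetry is obtained by iterating condition~(2) to build a weakly increasing chain in $P$ whose eventual stabilization (by finiteness) forces a common element of $X$ and $Y$. Your observation that condition~(1) is not actually needed for the poset axioms is also accurate; the paper does not invoke it in this proof either.
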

\begin{proof}
As previously mentioned, the fact that $\leq$ in $P/\sim$ is reflexive is clear. To see why it is antisymmetric, suppose that $X\leq Y$ and $Y\leq X$. By definition, there is an $x\in X$ and $y\in Y$ with $x\leq y$. Since $Y\leq X$ there is an $x'\in X$ with $x\leq y\leq x'$. Since $X\leq Y$ there is a $y' \in Y$ with $x\leq y\leq x'\leq y'$. Continuing, we get a chain 
$$
x\leq y\leq x'\leq y'\leq \dots
$$
If any of the inequalities are equalities then we are done since the equivalence classes partition $P$. If all are strict, then we would have an infinite chain in $P$, but this contradicts the fact that $P$ is finite. Therefore it must be that $X=Y.$

For transitivity, suppose that $X\leq Y$ and $Y\leq Z$.  Since $X\leq Y$, there is some $x\in X$ and $y\in Y$ with $x\leq y$.  Moreover, since $Y\leq Z$ and our quotient is homogeneous, there is some $z\in Z$ with $y\leq z$.  It follows that $x\leq y\leq z$ and so $X\leq Z$.
\end{proof}

Since we would like to use quotient posets to find characteristic polynomials, it would be quite helpful if the M\"obius value of an equivalence class was the sum of the M\"obius values of the elements of the equivalence class. This is not always the case when using homogeneous quotients, however we only need one simple requirement on the equivalence classes so that this does occur. Note the similarity of the hypothesis in the next result to the definition of the M\"obius function. In what follows we will use $\mu(x)$ to denote the M\"obius value of $x\in P$ and $\mu(X)$ to denote the M\"obius value of the equivalence class $X\in P/\sim$.

\begin{lem}\label{sumLem}
Let $P/\sim$ be a homogeneous quotient poset. Suppose that for all nonzero $X\in P/\sim$,
\begin{equation}\label{sumCondEq}
\sum_{y \in L(X)} \mu(y) = 0
\end{equation}
where $L(X)$ is the lower order ideal generated by $X$ in $P$. 
Then, for all equivalence classes $X$
\begin{displaymath}
\mu(X) = \sum_{x\in X} \mu(x).
\end{displaymath}
\end{lem}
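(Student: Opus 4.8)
The plan is to prove this by strong induction on the rank $\rho(X)$ of the equivalence class $X$ in $P/\sim$, using the recursive definition of the Möbius function on the quotient poset together with the homogeneity condition and the hypothesis \eqref{sumCondEq}.

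First I would handle the base case $X = \hat{0}$. Since $\hat{0}$ is in a class by itself (condition (1) of homogeneity), we have $\mu(\hat{0}) = 1 = \sum_{x \in \{\hat 0\}} \mu(x)$ trivially; note \eqref{sumCondEq} is only assumed for nonzero classes, consistent with this.

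For the inductive step, fix a nonzero class $X$ and assume the formula holds for all classes $Y < X$ in $P/\sim$. By the defining recursion, $\mu(X) = -\sum_{Y < X} \mu(Y)$, where the sum is over equivalence classes $Y$ strictly below $X$ in the quotient. Applying the inductive hypothesis, this becomes $\mu(X) = -\sum_{Y < X} \sum_{y \in Y} \mu(y)$. The key claim is that $\bigcup_{Y < X} Y = L(X) \setminus X$, i.e. the union of all classes strictly below $X$ in the quotient is exactly the lower order ideal generated by $X$ in $P$ with the elements of $X$ itself removed. Granting this, the double sum equals $\sum_{y \in L(X)} \mu(y) - \sum_{x \in X} \mu(x) = 0 - \sum_{x \in X}\mu(x)$ by \eqref{sumCondEq}, and hence $\mu(X) = \sum_{x \in X} \mu(x)$, completing the induction.

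The main obstacle is establishing that $\bigcup_{Y < X} Y = L(X) \setminus X$. The containment "$\subseteq$" is the easy direction: if $Y < X$ then $Y \le X$, so there exist $y_0 \in Y$ and $x_0 \in X$ with $y_0 \le x_0$, hence $y_0 \in L(X)$; and for an arbitrary $y \in Y$, homogeneity (condition (2)) applied to $Y \le X$ gives some $x \in X$ with $y \le x$, so $y \in L(X)$ — and $y \notin X$ since $Y \ne X$. The reverse containment "$\supseteq$" needs a short argument: if $z \in L(X) \setminus X$, then $z \le x$ for some $x \in X$, so letting $Z$ be the class of $z$ we have $Z \le X$; since $Z \ne X$ and the quotient is a genuine poset (by the preceding lemma), antisymmetry forces $Z < X$, so $z$ lies in a class strictly below $X$. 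I would also want to remark that $L(X)$ is a disjoint union of full equivalence classes — this follows because $z \in L(X)$ and the whole class $Z$ of $z$ satisfies $Z \le X$ (indeed $Z \le X$ via the witness $z \le x$), so by homogeneity every element of $Z$ lies below some element of $X$, i.e. $Z \subseteq L(X)$; this is what makes the splitting of the double sum into $\sum_{y\in L(X)}\mu(y) - \sum_{x\in X}\mu(x)$ legitimate. Once these set-theoretic identities are in place, the computation is routine.
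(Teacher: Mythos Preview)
Your proof is correct and follows essentially the same approach as the paper's: induct, expand $\mu(X) = -\sum_{Y<X}\mu(Y)$ via the inductive hypothesis, identify $\bigcup_{Y<X}Y$ with $L(X)\setminus X$ using homogeneity, and combine with the summation condition. One small caveat: you induct on $\rho(X)$ in $P/\sim$, but at this point the quotient is not yet known to be ranked (that is the \emph{next} lemma, under an additional hypothesis); the paper instead inducts on the length of the longest $\hat{0}$--$X$ chain, which is well-defined in any finite poset with $\hat{0}$.
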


\begin{proof}
We induct on the length of the longest $\hat{0}$--$X$ chain to prove the result. If the length is zero, then $X =\hat{0}$. Since $P/\sim$ is a homogeneous quotient, there is only one element in $X$ and it is $\hat{0}$. The M\"obius value of the minimum of any poset is 1 and so the base case holds.

Now suppose that the length is positive. Then $X \neq \hat{0}$ and so by assumption,
$$
\sum_{y \in L(X)} \mu(y) =0.
$$
Breaking this sum into two parts and moving one to the other side of the equation gives 
\begin{equation}\label{sumOfMu}
\sum_{x\in X}\mu(x)=-\sum_{y\in L(X) \setminus X} \mu(y).
\end{equation}

Using the definition of $\mu$ and the induction hypothesis, we have that
$$
\mu(X) = -\sum_{Y<X}\mu(Y) = -\sum_{Y<X} \left(\sum_{y\in Y }\mu(y)\right).
$$
Since $P/\sim$ is a homogeneous quotient poset, we have that if $Y<X$ then for every $y\in Y$ there is an $x \in X$ with $y<x$. Therefore the previous sum ranges over all $y$ such that there is an $x\in X$ with $y<x$. Thus $y\in L(X)\setminus X$. 
Conversely, for each $y \in L(X)\setminus X$ there is an $x\in X$ with $y<x$. By the definition of $\leq$ in $P/\sim$, we have that this implies $Y <X$ where $Y$ is the equivalence class of $y$. It follows that
\begin{equation}\label{sumOfMu2}
\mu(X) = - \sum_{y\in L(X)\setminus X}\mu(y).
\end{equation} 
Combining this equation with~\ree{sumOfMu} completes the proof.
\end{proof}

For the remainder of the paper, we shall refer to the condition given by equation~\ree{sumCondEq} as the \emph{summation condition}. From the previous lemma, we know how the M\"obius values behave when taking quotients under certain circumstances. We also need to know how the rank behaves under quotients. As we did earlier for the M\"obius function, we will use $\rho(x)$ for the rank of $x\in P$ and $\rho(X)$ for the rank of the equivalence class $X\in P/\sim$.

\begin{lem}\label{rankedQuotientLem}
Let $P/\sim$ be a homogeneous quotient poset. Suppose that for all $x,y\in P$, $x\sim y$ implies $\rho(x)=\rho(y)$. Then $P/\sim$ is ranked and $\rho(X) = \rho(x)$ for all $x\in X$.
\end{lem}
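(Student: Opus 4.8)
The plan is to show two things: first, that $P/\!\sim$ is ranked (every saturated $\hat 0$--$X$ chain has the same length), and second, that this common length equals $\rho(x)$ for any $x\in X$. The key fact I would exploit is the hypothesis that $x\sim y$ implies $\rho(x)=\rho(y)$, so that the assignment $X\mapsto \rho(x)$ (for any representative $x\in X$) is well-defined; call this number $\bar\rho(X)$. The goal is then to prove that $\bar\rho$ is a rank function for $P/\!\sim$, i.e.\ that $\hat 0$ has $\bar\rho=0$ and that $X\lessdot Y$ (a covering in $P/\!\sim$) forces $\bar\rho(Y)=\bar\rho(X)+1$.

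First I would establish a monotonicity estimate: if $X\le Y$ in $P/\!\sim$ then $\bar\rho(X)\le \bar\rho(Y)$, with equality only if $X=Y$. Indeed, $X\le Y$ gives some $x\in X$, $y\in Y$ with $x\le y$ in $P$, so $\bar\rho(X)=\rho(x)\le\rho(y)=\bar\rho(Y)$; if $\rho(x)=\rho(y)$ then $x=y$ (since $P$ is ranked), so $X=Y$. Consequently $<$ in $P/\!\sim$ strictly increases $\bar\rho$. Next, for the covering step, suppose $X\lessdot Y$ in $P/\!\sim$. Using homogeneity applied to $X\le Y$, pick $x\in X$; there is $y\in Y$ with $x\le y$ in $P$, and since $X\ne Y$ we may take $x<y$. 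Choose a saturated chain $x=z_0\lessdot z_1\lessdot\dots\lessdot z_k=y$ in $P$. Its image in $P/\!\sim$ is a chain from $X$ to $Y$ whose consecutive terms are comparable; passing to the equivalence classes $Z_i$ of the $z_i$, we get $X=Z_0\le Z_1\le\dots\le Z_k=Y$. Because $X\lessdot Y$, all the $Z_i$ must collapse to either $X$ or $Y$; by the monotonicity of $\bar\rho$ along $<$, the jump from $X$ to $Y$ happens at a single step, say $Z_{j}=X$ and $Z_{j+1}=Y$. Then $\bar\rho(Y)-\bar\rho(X)=\rho(z_{j+1})-\rho(z_j)=1$, since $z_j\lessdot z_{j+1}$ in the ranked poset $P$.

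With the covering step in hand, a short induction on the length of a saturated $\hat 0$--$X$ chain in $P/\!\sim$ shows every such chain has length $\bar\rho(X)$: the base case is $X=\hat 0$, which by Definition~\ref{homogenQuoDefi}(1) is a singleton $\{\hat 0\}$ with $\bar\rho(\hat 0)=\rho(\hat 0)=0$; the inductive step peels off the top cover $X'\lessdot X$, uses the covering identity $\bar\rho(X)=\bar\rho(X')+1$, and applies the induction hypothesis to $X'$. Hence all saturated $\hat 0$--$X$ chains have the same length, so $P/\!\sim$ is ranked, and its rank function is exactly $\bar\rho$, i.e.\ $\rho(X)=\rho(x)$ for all $x\in X$.

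The step I expect to be the main obstacle is the covering step — specifically, arguing cleanly that when $X\lessdot Y$, lifting to a saturated chain in $P$ and projecting back down cannot produce any intermediate equivalence class strictly between $X$ and $Y$, and that the rank increments by exactly one. This is where homogeneity (to get the lift $x<y$ for the chosen representative $x$) and the rank-compatibility hypothesis (to keep the projected chain's ranks under control, forcing the single unit jump) are both essential; one must be a little careful that the projected terms $Z_i$ need not be distinct, but monotonicity of $\bar\rho$ along the strict order resolves this.
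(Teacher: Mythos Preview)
Your proof is correct and takes essentially the same approach as the paper's: both arguments hinge on the covering step, showing that $X\lessdot Y$ in $P/\!\sim$ lifts to a cover $x\lessdot y$ in $P$ (you find it by walking along a saturated $x$--$y$ chain and locating the jump; the paper argues by contradiction that any intermediate $z$ would yield an intermediate class $Z$ via the rank hypothesis), and then conclude that $\bar\rho$ increments by one along covers, forcing all saturated $\hat 0$--$X$ chains to have length $\bar\rho(X)$.
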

\begin{proof}
We actually prove a stronger result. We claim that $X\cover Y$ (where $\cover$ denotes a covering relation) implies there is an $x\in X$ and a $y\in Y$ such that $x\cover y$. To see why this implies the lemma, suppose that there were two chains $\hat{0}=X_1\cover X_2 \cover \cdots \cover X_n$ and $\hat{0}=Y_1\cover Y_2 \cover \cdots \cover Y_m$ with $X_n=Y_m$. Then for the corresponding chains $\hat{0}=x_1\cover x_2 \cover \cdots \cover x_n$ and $\hat{0}=y_1\cover y_2 \cover \cdots \cover y_m$ we have that $\rho(x_n)=\rho(y_m)$ since elements in the same equivalence class have the same rank. This forces $n=m$ and so $P/\sim$ must be ranked. Additionally, it is easy to see that this implies that $\rho(X)=\rho(x)$ for all $x\in X$.

To prove our claim, note that by the definition of a homogeneous quotient, if $X\cover Y$ then there is an $x\in X$ and $y\in Y$ with $x<y$. Suppose that there was some $z\in P$ with $x<z<y$. Then $\rho(x)<\rho(z)<\rho(y)$ and $X\leq Z\leq Y$ where $Z$ is the equivalence class of $z$. Since all elements in an equivalence class have the same rank this implies that $X<Z<Y$ in $P/\sim$, which contradicts the fact that $Y$ covered $X$.
\end{proof}

Applying Lemma~\ref{sumLem}, Lemma~\ref{rankedQuotientLem} and the definition of the characteristic polynomial we immediately get the following corollary.

\begin{cor}\label{chiQuoCor}
Let $P/\sim$ be a homogeneous quotient. If the summation condition~\ree{sumCondEq} holds for all nonzero $X\in P/\sim$, and $x\sim y$ implies $\rho(x)=\rho(y)$, then
\begin{displaymath}
\chi(P/\sim,t)=\chi(P,t).
\end{displaymath}
\end{cor}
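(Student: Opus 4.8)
The plan is to derive this directly by combining the two preceding lemmas with the definition of the characteristic polynomial, so the argument should be short. First I would invoke Lemma~\ref{rankedQuotientLem}: since $x\sim y$ implies $\rho(x)=\rho(y)$, the quotient $P/\sim$ is ranked and $\rho(X)=\rho(x)$ for every $x\in X$. In particular, because the equivalence classes partition $P$, we get $\rho(P/\sim)=\max_{X}\rho(X)=\max_{x\in P}\rho(x)=\rho(P)$, so the exponent $\rho(P/\sim)-\rho(X)$ appearing in $\chi(P/\sim,t)$ equals $\rho(P)-\rho(x)$ for any representative $x\in X$.

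Next I would invoke Lemma~\ref{sumLem}: the summation condition~\ree{sumCondEq} gives $\mu(X)=\sum_{x\in X}\mu(x)$ for every equivalence class $X$. Substituting both facts into the definition of the characteristic polynomial yields
\begin{displaymath}
\chi(P/\sim,t)=\sum_{X\in P/\sim}\mu(X)\,t^{\rho(P/\sim)-\rho(X)}
=\sum_{X\in P/\sim}\left(\sum_{x\in X}\mu(x)\right)t^{\rho(P)-\rho(x)},
\end{displaymath}
where in the last expression $\rho(x)$ may be taken to be the common rank of the elements of $X$. Since the equivalence classes partition $P$, the double sum collapses to $\sum_{x\in P}\mu(x)\,t^{\rho(P)-\rho(x)}=\chi(P,t)$, which is the desired conclusion.

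There is no real obstacle here: the work has already been done in Lemmas~\ref{sumLem} and~\ref{rankedQuotientLem}, and all that remains is bookkeeping. The only point worth stating explicitly is the observation that $\rho(P/\sim)=\rho(P)$, which is needed to match the powers of $t$ term by term; this follows at once from the fact that each equivalence class has a well-defined rank equal to the rank of any of its members and that the classes cover all of $P$.
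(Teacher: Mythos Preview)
Your proof is correct and follows exactly the approach the paper indicates: it simply combines Lemma~\ref{sumLem} and Lemma~\ref{rankedQuotientLem} with the definition of $\chi$, and the paper itself offers no further argument beyond stating that the corollary is immediate from these ingredients. Your explicit remark that $\rho(P/\sim)=\rho(P)$ is a helpful clarification of the one bookkeeping point needed to make the exponents match.
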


We now have conditions under which the characteristic polynomial does not change when taking a quotient. 
However, the previous results do not tell us how to choose an appropriate equivalence relation for a given poset. It turns out that when the poset is a lattice, there is a canonical choice for $\sim$, as we will see in the next section.

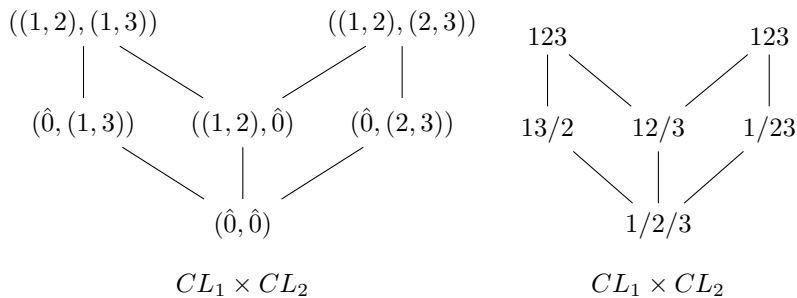
\begin{figure}
\begin{center}
\begin{tikzpicture}
\tikzstyle{elt}=[rectangle]
\matrix{
\node(ab){$((1,2),(1,3))$};&&\node(ac){$((1,2),(2,3))$};\\\\[20pt] 
\node(b0){$(\hat{0},(1,3))$}; &\node(a0){$((1,2),\hat{0})$}; &;\node(c0){$(\hat{0},(2,3))$};\\\\[20pt]
&;\node(0)[elt]{$(\hat{0},\hat{0})$};&;\\
&;\node(space){\vspace{1in}};&;\\
&;\node(text1){$CL_1\times CL_2$};&;\\
};
\draw(0)--(a0);
\draw(0)--(b0);
\draw(0)--(c0);
\draw(a0)--(ab);
\draw(a0)--(ac);
\draw(b0)--(ab);
\draw(c0)--(ac);
\end{tikzpicture}
\begin{tikzpicture}
\tikzstyle{elt}=[rectangle]
\matrix{
\node(ab){$123$};&&\node(ac){$123$};\\\\[20pt] 
\node(b0){$13/2$}; &\node(a0){$12/3$}; &;\node(c0){$1/23$};\\\\[20pt]
&;\node(0)[elt]{$1/2/3$};&;\\
&;\node(space){\vspace{1in}};&;\\
&;\node(text1){$CL_1\times CL_2$};&;\\
};
\draw(0)--(a0);
\draw(0)--(b0);
\draw(0)--(c0);
\draw(a0)--(ab);
\draw(a0)--(ac);
\draw(b0)--(ab);
\draw(c0)--(ac);
\end{tikzpicture}
\end{center}

\caption{Hasse diagrams for partition lattice example with new labelings}\label{pi3FigNewLabel}

\end{figure}

\section{The Standard Equivalence Relation}
\label{ser}

Let us look at the partition lattice example again and give new labelings to $CL_1\times CL_2$ which will be helpful in determining an equivalence relation. First, we set up some notation for the atoms of the partition lattice. For $i<j$, let $(i ,j)$ \ denote the atom which has $i$ and $j$ in one block and all other elements in singleton blocks. Let $CL_1$ have its atom labeled by $(1,2)$ and $CL_2$ have its atoms labeled by $(1,3)$ and $(2,3)$. In both of the claws, label the minimum element by $\hat{0}$. The poset on the left in Figure~\ref{pi3FigNewLabel} shows the induced labeling on $CL_1\times CL_2$.

Now relabel $CL_1\times CL_2$ by taking the join in $\Pi_3$ of the two elements in each pair. The poset on the right in Figure~\ref{pi3FigNewLabel} shows this step. Finally, identify elements which have the same label. In this case, this means identifying the top two elements as we did before. Upon doing this, we get a poset which is isomorphic to $\Pi_3$ and has the same labeling as $\Pi_3$. 

In order to generalize the previous example, we will be putting an equivalence relation on the product of claws whose atom sets come from partitioning the atoms of the original lattice. We need some terminology before we can define our equivalence relation. 

Suppose that $L$ is a lattice and $(A_1, A_2, \dots, A_n)$ is an ordered partition of the atoms of $L$. We will use $CL_{A_i}$ to denote the claw whose atom set is $A_i$ and whose minimum element is labeled by $\hat{0}_L$ (or just $\hat{0}$ if $L$ is clear from context). The elements of $\prod_{i=1}^n CL_{A_i}$ will be called \emph{atomic transversals} and written in boldface. (The reason for the adjective ``atomic" is because we will be considering more general transversals in Section~\ref{rt}.) Since the rank of an element in the product of claws is just the number of nonzero elements in the tuple, it will be useful to have a name for this number. 
For $\bfm{t} \in~\prod_{i=1}^n CL_{A_i}$ define the \emph{support} of $\bfm{t}$ as the number of nonzero elements in the tuple $\bfm{t}$. We will denote it by $\supp \bfm{t}$.

We will use the notation $\bfm{t}(e^{i})$ to denote the ordered tuple obtained by replacing the $i^{th}$ coordinate of $\bfm{t} =(t_1,t_2,\dots,t_n)$ with an element $e$. That is, 
$$
\bfm{t}(e^{i}) =(t_1, t_2, \dots, t_{i-1}, e, t_{i+1}, \dots, t_n).
$$
We will also need a notation for the join of the elements of $\bfm{t}$ which will be
\begin{displaymath}
\bigvee \bfm{t} = t_1\vee t_2 \vee \dots \vee t_n.
\end{displaymath}

With this new terminology we are now in a position to define a natural equivalence relation on the product of the claws. Since we are trying to show that the characteristic polynomial of a lattice has certain roots, we will need to show that the quotient of the product of claws is isomorphic to the lattice. Therefore it is reasonable to define the equivalence relation by identifying two elements of the product of claws if their joins are the same in $L$. 

\begin{defi}
Let $L$ be a lattice and let $(A_1, A_2, \dots, A_n)$ be an ordered partition of the atoms of $L$. The \emph{standard equivalence relation} on $\prod_{i=1}^n CL_{A_i}$ is defined by
\begin{displaymath}
\bfm{s} \sim \bfm{t} \mbox{ in }\prod_{i=1}^n CL_{A_i} \iff \bigvee \bfm{s} =\bigvee \bfm{t} \mbox{ in } L.
\end{displaymath}
\end{defi}

We will use the notation 
\begin{displaymath}
\mathcal{T}_x^A= \left\{\bfm{t}\in\prod_{i=1}^n CL_{A_i} : \bigvee \bfm{t}=x\right \}
\end{displaymath}
and call the elements of this set \emph{atomic transversals of $x$}. Therefore, the equivalence classes of the quotient $\left(\prod_{i=1}^n CL_{A_i}\right)/\sim$ are of the form $\mathcal{T}_x^A$ for some $x\in L$. It is obvious that the standard equivalence relation is an equivalence relation. To be able to use any of the theorems from the previous section, we need to make sure that taking the quotient with respect to the standard equivalence relation gives us a homogeneous quotient. Moreover, we will need a way to determine if the summation condition~\ree{sumCondEq} holds for all nonzero elements of the quotient. We do this in the next lemma. For the rest of the paper we will use the notation $A_x$ for the set of atoms below $x$.

\begin{lem}\label{uniformRankImpliesHomogenQuotLem}
Let $L$ be a lattice and let $(A_1, A_2, \dots, A_n)$ be an ordered partition of the atoms of $L$. Let $\sim$ be the standard equivalence relation on $\prod_{i=1}^n CL_{A_i}$. 
Suppose that the following hold. 
\begin{enumerate}
\item[(1)] For all $x\in L$, $\mathcal{T}_x^A \neq \emptyset$. 
\item[(2)] If $\bfm{t}\in \mathcal{T}_x^A$, then $|\supp \bfm{t}|=\rho(x)$.
\end{enumerate}
Under these conditions, 
\begin{itemize}
\item[(a)] The lower order ideal generated by the set $\mathcal{T}_x^A$ in $\prod_{i=1}^n CL_{A_i}$ is given by 
$$
L(\mathcal{T}_x^A) =\{\bfm{t} : t_i\leq x \mbox{ for all } i\}.
$$
\item[(b)] The quotient $\left(\prod_{i=1}^n CL_{A_i}\right)/\sim$ is homogeneous.
\item[(c)] For all nonzero $\mathcal{T}_x^A \in \left(\prod_{i=1}^n CL_{A_i}\right)/\sim$ the summation condition~\ree{sumCondEq} holds if and only if for each  nonzero  $x\in L$ there is an $i$ such that $|A_i \cap A_x|=1$.
\end{itemize}
\end{lem}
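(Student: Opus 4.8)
The plan is to prove parts (a), (b), (c) in order, since each builds on the previous one.

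\medskip

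\noindent\textbf{Part (a).} First I would establish that $L(\mathcal{T}_x^A) = \{\bfm{t} : t_i \le x \text{ for all } i\}$. The inclusion $\subseteq$ is easy: if $\bfm{t} \le \bfm{s}$ for some $\bfm{s} \in \mathcal{T}_x^A$, then each $t_i \le s_i \le \bigvee \bfm{s} = x$. For the reverse inclusion, suppose $t_i \le x$ for all $i$; I need to produce some $\bfm{s} \in \mathcal{T}_x^A$ with $\bfm{t} \le \bfm{s}$. Here I would invoke hypothesis (1): since $\mathcal{T}_x^A \ne \emptyset$, pick any $\bfm{u} \in \mathcal{T}_x^A$, so $\bigvee \bfm{u} = x$. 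By hypothesis (2), $|\supp \bfm{u}| = \rho(x)$, and $\bigvee\bfm{u}=x$ is a join of $\rho(x)$ atoms. Now I want to merge $\bfm{t}$ into $\bfm{u}$: define $\bfm{s}$ coordinatewise by $s_i = u_i$ if $u_i \ne \hat 0$, and $s_i = t_i$ if $u_i = \hat 0$. Then $\bigvee \bfm{s} \ge \bigvee \bfm{u} = x$, and also $\bigvee\bfm{s} \le x$ since every $s_i$ is either a $u_i \le x$ or a $t_i \le x$; hence $\bigvee\bfm{s} = x$ and $\bfm{s} \in \mathcal{T}_x^A$. Finally $\bfm{t} \le \bfm{s}$: in a coordinate where $u_i \ne \hat 0$ we have $s_i = u_i$, and since $u_i$ is an atom and $t_i \le x$ is an atom or $\hat 0$ in the same claw $CL_{A_i}$, we get $t_i \le s_i$ (they are comparable in the claw only if equal or $t_i = \hat 0$; but two distinct atoms of the same claw are incomparable, so I must be slightly more careful here — the point is that $s_i$ lies in $CL_{A_i}$ and equals the unique atom $u_i$, and $t_i$ is either $\hat 0$ or an atom of $CL_{A_i}$; if $t_i$ is an atom different from $u_i$ then $\bfm{t} \le \bfm{s}$ fails). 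The fix: choose $\bfm{u}$ to begin with so that $u_i = t_i$ whenever $t_i \ne \hat 0$. This is possible because $\{t_i : t_i \ne \hat 0\}$ is an antichain of atoms below $x$ lying in distinct claws, and by (1)–(2) any maximal such "independent" selection extends to a transversal of $x$; I would spell out this extension argument as a small sublemma. Then $\bfm{t} \le \bfm{u} \in \mathcal{T}_x^A$ directly.

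\medskip

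\noindent\textbf{Part (b).} To show $(\prod CL_{A_i})/\sim$ is homogeneous, I check the two conditions of Definition~\ref{homogenQuoDefi}. Condition (1): the class of $\hat 0 = (\hat 0,\dots,\hat 0)$ is $\mathcal{T}_{\hat 0}^A$, which consists of the single all-$\hat 0$ tuple, since $\bigvee \bfm{t} = \hat 0$ forces every $t_i = \hat 0$. Condition (2): suppose $\mathcal{T}_x^A \le \mathcal{T}_y^A$ in the quotient, so there exist $\bfm{s}$ with $\bigvee\bfm{s}=x$, $\bfm{t}$ with $\bigvee\bfm{t}=y$, and $\bfm{s}\le\bfm{t}$; this gives $x = \bigvee\bfm{s} \le \bigvee\bfm{t} = y$, so $x \le y$. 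I must show every $\bfm{s}' \in \mathcal{T}_x^A$ is $\le$ some element of $\mathcal{T}_y^A$. Given $\bfm{s}' \in \mathcal{T}_x^A$, each $s'_i \le x \le y$, so $\bfm{s}' \in L(\mathcal{T}_y^A)$ by part (a); hence $\bfm{s}' \le \bfm{t}'$ for some $\bfm{t}' \in \mathcal{T}_y^A$. This is exactly condition (2), so the quotient is homogeneous.

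\medskip

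\noindent\textbf{Part (c).} Now I translate the summation condition~\ree{sumCondEq} using part (a). For nonzero $\mathcal{T}_x^A$,
$$
\sum_{\bfm{t} \in L(\mathcal{T}_x^A)} \mu(\bfm{t}) = \sum_{\substack{\bfm{t} : t_i \le x \text{ for all } i}} \mu(\bfm{t}) = \prod_{i=1}^n \Big( \sum_{u \le x,\ u \in CL_{A_i}} \mu(u) \Big),
$$
where I use that $L(\mathcal{T}_x^A)$ is a product of intervals and $\mu$ on a product is the product of the $\mu$'s (Lemma~\ref{chiLem}, or rather the standard product formula for Möbius functions). In the claw $CL_{A_i}$ the elements $\le x$ are $\hat 0$ together with the atoms in $A_i \cap A_x$, and for each atom $a$ of a claw $\mu(a) = -1$ while $\mu(\hat 0) = 1$; hence the $i$-th factor equals $1 - |A_i \cap A_x|$. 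Therefore
$$
\sum_{\bfm{t} \in L(\mathcal{T}_x^A)} \mu(\bfm{t}) = \prod_{i=1}^n \big( 1 - |A_i \cap A_x| \big),
$$
and this product is $0$ if and only if some factor is $0$, i.e.\ $|A_i \cap A_x| = 1$ for some $i$. Since the summation condition asks exactly that this sum vanish for every nonzero $\mathcal{T}_x^A$, i.e.\ for every nonzero $x \in L$ (using (1) to know every $x$ gives a class), the equivalence in (c) follows.

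\medskip

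\noindent\textbf{Main obstacle.} I expect the delicate point to be the extension argument in part (a): showing that an arbitrary "independent partial transversal" (a selection of atoms below $x$, at most one per block $A_i$) can be completed to a full transversal $\bfm{u} \in \mathcal{T}_x^A$ using only hypotheses (1) and (2). Everything else — the homogeneity checks and the Möbius computation in (c) — is routine once (a) is in hand. It may be cleanest to prove (a) by downward induction on $\supp\bfm{t}$ or by directly appealing to the structure forced by $|\supp\bfm{u}| = \rho(x)$ for all $\bfm{u} \in \mathcal{T}_x^A$.
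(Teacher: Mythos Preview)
Your approach matches the paper's essentially step for step: part (a) first, then (b) using (a), then (c) by computing $\sum_{\bfm{t}\in L(\mathcal{T}_x^A)}\mu(\bfm{t})=\prod_i(1-|A_i\cap A_x|)$. Parts (b) and (c) are complete as written; the only thing you leave open is the ``extension sublemma'' in (a), which you correctly flag as the main obstacle.

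The paper fills exactly that gap with a one-line trick that avoids any induction or maximality argument. Take any $\bfm{u}\in\mathcal{T}_x^A$ (which exists by (1)), pick an index $i$ with $t_i\neq\hat 0$, and set $\bfm{r}=\bfm{u}(t_i^{\,i})$. Then every coordinate of $\bfm{r}$ is $\le x$, so $\bigvee\bfm{r}\le x$; but $|\supp\bfm{r}|\ge|\supp\bfm{u}|=\rho(x)$, so hypothesis~(2) forces $\rho(\bigvee\bfm{r})=|\supp\bfm{r}|\ge\rho(x)$, hence $\bigvee\bfm{r}=x$ and $\bfm{r}\in\mathcal{T}_x^A$. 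Iterating this replacement over all indices $i$ in $\supp\bfm{t}$ produces an $\bfm{s}\in\mathcal{T}_x^A$ with $s_i=t_i$ whenever $t_i\neq\hat 0$, so $\bfm{t}\le\bfm{s}$. This is precisely the ``directly appealing to the structure forced by $|\supp\bfm{u}|=\rho(x)$'' option you mention at the end, and it is the cleanest route.
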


\begin{proof} First, we show (a).
We claim that assumptions (1) and (2) imply that if $a\in A_x$ then there is an atomic transversal for $x$ which contains $a$. To verify the claim, use assumption (1) to pick $\bfm{t} \in \mathcal{T}_x^A$ and let $\bfm{r} = \bfm{t}(a^i)$. By construction and assumption (2), $\rho(\bigvee \bfm{r}) = |\supp \bfm{r}| \geq |\supp \bfm{t}| =\rho(x)$. But also $\bigvee \bfm{r} \leq x$ which forces $\bigvee \bfm{r} =x$. Thus $a$ is in the atomic transversal $\bfm{r}$ for $x$.

The definition of $\mathcal{T}_x^A$ gives us the inclusion $ L(\mathcal{T}_x^A) \subseteq\{\bfm{t} : t_i\leq x \mbox{ for all } i\}$. The reverse inclusion holds by the previous claim.

Next, we verify (b). It is clear that $\bfm{t}\in \mathcal{T}_{\hat{0}}^A$ if and only if $\bfm{t}= (\hat{0},\hat{0},\dots, \hat{0})$ and so part (1) of Definition~\ref{homogenQuoDefi} is satisfied. To show part (2), suppose that $\mathcal{T}_x^A\leq \mathcal{T}_y^A$ as in Definition~\ref{quoPosetDef}. Then there is some $\bfm{t}\in\mathcal{T}^A_x$ and $\bfm{s}\in\mathcal{T}_y^A$ with $\bfm{t}\leq \bfm{s}$. It follows that $\bigvee \bfm{t}\leq \bigvee\bfm{s}$ and so $x\leq y$.  Let $\bfm{t}\in\mathcal{T}_x^A$. Using the fact that $t_i\leq x\leq y$ and part (a), we have that $\bfm{t}\in L(\mathcal{T}_y^A)$. It follows that there is some $\bfm{s}\in \mathcal{T}_y^A$ with $\bfm{t}\leq \bfm{s}$ and so part (2) of Definition~\ref{homogenQuoDefi} holds.

Finally, we demonstrate (c). Fix $x\in L$ and let $N_i$ be the number of atoms below $x$ in $A_i$. Let $I$ be the set of indices $i$ such that $N_i>0$. By relabeling, if necessary, we may assume that $I=\{1,2, \dots, k\}$. It follows from part (a) that the number of atomic transversals in $ L(\mathcal{T}_x^A)$ with support size $i$ is $e_{i}(N_1, N_2, \dots, N_k)$ where $e_i$ is the $i^{th}$ elementary symmetric function. For each atomic transversal $\bfm{t} \in L(\mathcal{T}_x^A)$ we have that $\mu(\bfm{t}) = (-1)^{|\supp \bfm{t}|}$. Therefore, 
$$
\sum_{\bfm{t}\in L(\mathcal{T}_x^A)} \mu(\bfm{t}) = \sum_{i=0}^k (-1)^i e_i(N_1, N_2, \dots, N_k)= \prod_{i=1}^k(1-N_i).
$$
Therefore the summation condition~\ree{sumCondEq} holds for each nonzero element in the quotient if and only if for each nonzero $x\in L$ there is an index $i$ such that $|A_i\cap A_x|=1$. 
\end{proof}

Combining the previous result with Corollary~\ref{chiQuoCor} gives conditions under which the product of claws and its quotient have the same characteristic polynomial. We also need to show that there is an isomorphism between $L$ and this quotient. This will give us the desired factorization.

\begin{thm}\label{bigThmAtomicVer}
Let $L$ be a lattice and let $(A_1, A_2, \dots, A_n)$ be an ordered partition of the atoms of $L$. Let $\sim$ be the standard equivalence relation on $\prod_{i=1}^n CL_{A_i}$. Suppose the following hold.
\begin{enumerate}
\item[(1)] For all $x\in L$, $\mathcal{T}_x^A \neq \emptyset$.
\item[(2)] If $\bfm{t}\in \mathcal{T}_x^A$, then $|\supp \bfm{t}|=\rho(x)$.
\item[(3)] For each nonzero $x\in L$ there is some $i$ with $|A_i \cap A_x |=1$. 
\end{enumerate}
Then we can conclude the following. 
\begin{itemize}
\item[(a)] For all $x\in L$, $\mu(x) = (-1)^{\rho(x)}|\mathcal{T}_x^A|$.
\item[(b)] $\chi(L,t) = \displaystyle\prod_{i=1}^n (t-|A_i|)$.
\end{itemize}
\end{thm}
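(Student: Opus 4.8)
The plan is to realize $L$ as a homogeneous quotient of $\prod_{i=1}^n CL_{A_i}$ under the standard equivalence relation and then transport the trivial characteristic polynomial of the product of claws down to $L$. First I would observe that hypotheses (1) and (2) are precisely the hypotheses of Lemma~\ref{uniformRankImpliesHomogenQuotLem}, so its conclusions (a), (b), (c) are available: the quotient $\left(\prod_{i=1}^n CL_{A_i}\right)/\!\sim$ is homogeneous, $L(\mathcal{T}_x^A)=\{\bfm{t}:t_i\le x\text{ for all }i\}$, and combining conclusion~(c) with hypothesis~(3) shows that the summation condition~\ree{sumCondEq} holds for every nonzero class $\mathcal{T}_x^A$. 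I would also note that $\sim$ respects rank: if $\bfm{s}\sim\bfm{t}$ then $\bigvee\bfm{s}=\bigvee\bfm{t}=:x$, so $|\supp\bfm{s}|=\rho(x)=|\supp\bfm{t}|$ by~(2), and the rank of an atomic transversal in a product of claws equals its support size. Hence Corollary~\ref{chiQuoCor} applies and, together with part~(1) of Lemma~\ref{chiLem} and $\chi(CL_{A_i},t)=t-|A_i|$, gives
\[
\chi\Bigl(\bigl(\prod_{i=1}^n CL_{A_i}\bigr)/\!\sim,\,t\Bigr)=\chi\bigl(\prod_{i=1}^n CL_{A_i},\,t\bigr)=\prod_{i=1}^n(t-|A_i|).
\]

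Next I would construct the isomorphism $L\cong\left(\prod_{i=1}^n CL_{A_i}\right)/\!\sim$ via the natural candidate $\phi:\mathcal{T}_x^A\mapsto x$, which is well defined and injective by the definition of the standard equivalence relation and surjective by hypothesis~(1). For the order isomorphism I would check both directions: if $\mathcal{T}_x^A\le\mathcal{T}_y^A$ then some $\bfm{t}\in\mathcal{T}_x^A$ lies below some $\bfm{s}\in\mathcal{T}_y^A$, whence $x=\bigvee\bfm{t}\le\bigvee\bfm{s}=y$; conversely, if $x\le y$, pick any $\bfm{t}\in\mathcal{T}_x^A$ (hypothesis~(1)), note $t_i\le x\le y$ for all $i$, so $\bfm{t}\in L(\mathcal{T}_y^A)$ by part~(a) of Lemma~\ref{uniformRankImpliesHomogenQuotLem}, and therefore $\bfm{t}\le\bfm{s}$ for some $\bfm{s}\in\mathcal{T}_y^A$, i.e.\ $\mathcal{T}_x^A\le\mathcal{T}_y^A$. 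Much of this already appears inside the proof of Lemma~\ref{uniformRankImpliesHomogenQuotLem}(b). Part~(2) of Lemma~\ref{chiLem} then yields $\chi(L,t)=\prod_{i=1}^n(t-|A_i|)$, proving~(b).

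For~(a): since the summation condition holds, Lemma~\ref{sumLem} gives $\mu(\mathcal{T}_x^A)=\sum_{\bfm{t}\in\mathcal{T}_x^A}\mu(\bfm{t})$, and every atomic transversal of $x$ has $\mu(\bfm{t})=(-1)^{|\supp\bfm{t}|}=(-1)^{\rho(x)}$ by hypothesis~(2), so $\mu(\mathcal{T}_x^A)=(-1)^{\rho(x)}|\mathcal{T}_x^A|$; transporting through $\phi$ (the M\"obius function is an isomorphism invariant) gives $\mu(x)=(-1)^{\rho(x)}|\mathcal{T}_x^A|$. I expect the only genuine work to be verifying that $\phi$ is an order isomorphism, specifically the implication $x\le y\Rightarrow\mathcal{T}_x^A\le\mathcal{T}_y^A$, where one must exhibit an atomic transversal of $y$ dominating a given atomic transversal of $x$; this is exactly the point where the explicit description of the lower order ideal in Lemma~\ref{uniformRankImpliesHomogenQuotLem}(a) is indispensable. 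Everything else is bookkeeping with results already established.
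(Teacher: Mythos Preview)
Your proposal is correct and follows essentially the same route as the paper's proof: invoke Lemma~\ref{uniformRankImpliesHomogenQuotLem} to get homogeneity, the lower-order-ideal description, and the summation condition from hypothesis~(3); construct the obvious bijection $\mathcal{T}_x^A\leftrightarrow x$ and verify it is an order isomorphism using that lower-ideal description; then read off~(b) from Corollary~\ref{chiQuoCor} and~(a) from Lemma~\ref{sumLem} together with hypothesis~(2). You are even a touch more explicit than the paper in checking that $\sim$ preserves rank before invoking Corollary~\ref{chiQuoCor}, which the paper leaves implicit.
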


\begin{proof}
Let $P= \prod_{i=1}^n CL_{A_i}$. First, we show that $L\cong P/\sim$. Define a map $\varphi: (P/\sim) \hspace{5 pt} \rightarrow L$  by $\varphi(\mathcal{T}_x^A) =x$. It is easy to see that $\varphi$ is well-defined. Define 
$\psi: L \rightarrow ( P/\sim)$ by $\psi(x) = \mathcal{T}_x^A$. By assumption $\mathcal{T}_x^A\neq \emptyset$ and so $\psi$ is well-defined. Moreover, it is clear that $\varphi$ and $\psi$ are inverses of each other.

To show that $\varphi$ is order preserving, suppose that $\mathcal{T}_x^A \leq \mathcal{T}_y^A$. Then just as in the proof of Lemma~\ref{uniformRankImpliesHomogenQuotLem} part (b), we have that $x\leq y$ and so $\varphi$ is order preserving.

To show that $\psi$ is order preserving, suppose that $x\leq y$. Then applying the same technique as in the proof of Lemma~\ref{uniformRankImpliesHomogenQuotLem} part (b) we get that there is a $\bfm{t}\in\mathcal{T}_x^A$ and $\bfm{s}\in\mathcal{T}_y^A$ with $\bfm{t}\leq \bfm{s}$. By the definition of $\leq$ in $P/\sim$ we get that $\mathcal{T}_x^A\leq \mathcal{T}_y^A$ and so $\psi$ is order preserving.

To obtain (a), note that the M\"obius value of an element in the product of claws is $\mu(\bfm{t})=(-1)^{|\supp \bfm{t}|}$. Therefore, using Lemma~\ref{sumLem}, we get
$$
\mu(\mathcal{T}_x^A) = \sum_{\bfm{t}\in \mathcal{T}_x^A} \mu(\bfm{t})= \sum_{\bfm{t}\in \mathcal{T}_x^A} (-1)^{|\supp \bfm{t}|}.
$$
Using the isomorphism between $L$ and the quotient as well as the fact that, by assumption (2), all the atomic transversals for $x$ have size $\rho(x)$, we have 
$$
\mu(x) = \mu(\mathcal{T}_x^A)= (-1)^{\rho(x)}|\mathcal{T}_x^A|
$$
as desired.

Finally, to verify (b) apply Corollary~\ref{chiQuoCor} and Lemma~\ref{uniformRankImpliesHomogenQuotLem} to get that
$$
\prod_{i=1}^n (t-|A_i|)=\chi(P,t) = \chi(P/\sim,t).
$$
Now part (b) follows immediately since $L\cong P/\sim$.
\end{proof}

\begin{example}\label{piExample}
Let us return to the partition lattice $\Pi_n$ and see how we can apply Theorem~\ref{bigThmAtomicVer}. Label the atoms $(i,j)$ as in the beginning of section~\ref{ser}. Partition the atoms as $(A_1,A_2, \dots, A_{n-1})$ where 
$$
A_j=\{(i, j+1)\mid i<j+1\}.
$$ 
With each atomic transversal $\bfm{t}$ we will associate a graph, $G_{\bfm{t}}$ on $n$ vertices such that there is an edge between vertex $i$ and vertex $j$ if and only if $(i,j)$ is in $\bfm{t}$. We will use the graph to verify the assumptions of Theorem~\ref{bigThmAtomicVer} are satisfied for $\pi\in \Pi_n$.

First, let us show assumption (1) of Theorem~\ref{bigThmAtomicVer}  holds. In the case where $\pi \in \Pi_n$ consists of a single block $B=\{b_1<b_2<\dots<b_m\}$, the elements $(b_1,b_2), (b_2,b_3),\dots, (b_{m-1},b_m)$ form the non-trivial elements of an atomic transversal whose join is $B$. Now to get the elements which have more than one nontrivial block, follow the same procedure for each block and use the transversal which corresponds to the union of the transversals of the blocks considered as sets.  It follows every element has an atomic transversal.

Next, we prove that assumption (2) holds. We claim that if $\bfm{t} \in \mathcal{T}_\pi^A$ then $G_{\bfm{t}}$ is a forest. To see why, suppose that there was a cycle and let $c$ be the largest vertex in the cycle. Then $c$ must be adjacent to two smaller vertices $a$ and $b$ which implies that both $(a,c)$ and $(b,c)$ must be in $\bfm{t}$. This is impossible since both come from $A_{c-1}$. 

Since $G_{\bfm{t}}$ is a forest, if $G_{\bfm{t}}$ has $k$ components then the number of edges in $G_{\bfm{t}}$ is $n-k$. It is not hard to see that $i$ and $j$ are in the same block in $\bigvee \bfm{t}$ if and only if $i$ and $j$ are in the same component of $G_{\bfm{t}}$. Moreover, it is well known that if $\pi\in \Pi_n$ and $\pi$ has $k$ blocks then $\rho(\pi)=n-k$. It follows that if $\bfm{t}\in \mathcal{T}_\pi^A$ and $\pi$ has $k$ blocks then $ |\supp \bfm{t}| =|E(G_{\bfm{t}})|= n-k=\rho(\pi)$. We conclude that assumption (2) holds.

Finally, to verify assumption (3), let $\pi\in \Pi_n$ with $\pi\neq \hat{0}$. Then $\pi$ contains a nontrivial block. Let $i$ be the second smallest number in this block. We claim that there is only one atom in $A_{i-1}$ below $\pi$. First note that there is some atom below $\pi$ in $A_{i-1}$ namely $(a,i)$ where $a$ is the smallest element of the block. Suppose there was more than one atom below $\pi$ in $A_{i-1}$ and let $(a,i), (b,i)\in A_{i-1}$ with $(a,i), (b,i)\leq\pi$. Then $(a,i)\vee (b,i)\leq \pi$ and so $a$, $b$ and $i$ are all in the same block in $\pi$ which is impossible since $a,b<i$ but $i$ was chosen to be the second smallest in its block.

Now applying the theorem we get that 
\begin{displaymath}
\chi(\Pi_n,t) = (t-1)(t-2)\cdots(t-n+1)
\end{displaymath}
since $|A_i|=i$ for $1\le i\le n-1$.

We should note that it is not trivial to find a partition of the atom set which satisfies the conditions of Theorem~\ref{bigThmAtomicVer}.  However, for certain lattices (including $\Pi_n$) there is a canonical choice for the partition.  This is described in more detail in section~\ref{pic}.
\end{example}

  Theorem~\ref{bigThmAtomicVer} can be used to prove  Terao's result~\cite{t:fosa} about the characteristic polynomial of a hyperplane arrangement with a nice partition.  In fact the notion of a nice partition is the combination of assumptions (2) and (3) of Theorem~\ref{bigThmAtomicVer} in the special case of a central hyperplane arrangement.

\section{Rooted Trees}
\label{rt}

One of the drawbacks of Theorem~\ref{bigThmAtomicVer} is that assumption (1) requires that every element of the lattice is \emph{atomic} meaning that is a join of atoms.   In this case $L$ is said to be \emph{atomic}. However, by generalizing the notion of a claw to that of a rooted tree, we will be able to remove this assumption and derive Theorem~\ref{bigThm} below which applies to a wider class of lattices. 
\begin{defi}
Let $L$ be a lattice and $S$ be a subset of $L$ containing $\hat{0}$. Let $\mathcal{C}$ be the collection of saturated chains of $L$ which start at $\hat{0}$ and use only elements of $S$. The \emph{rooted tree with respect to $S$} is the poset obtained by ordering $\mathcal{C}$ by inclusion and will be denoted by $RT_S$. 
\end{defi}

It is easy to see that given any subset $S$ of a lattice containing $\hat{0}$, the Hasse diagram of $RT_S$ always contains a $\hat{0}$ and has no cycles. This explains the choice of rooted tree for the name of the poset.

Strictly speaking the elements of $RT_S$ are chains of $L$. However, it will be useful to think of the elements of $RT_S$ as elements of $L$ where we associate a chain $C$ with its top element. One can still recover the full chain by considering the unique path from $\hat{0}$ to $C$ in $RT_S$.
Let us consider an example in $\Pi_3$. As before, partition the atom set as $A_1=\{12/3\}$ and $A_2=\{13/2,1/23\}$. Let $S_1, S_2$ be the upper order ideals generated by $A_1, A_2$, respectively, together with $\hat{0}$. Then we get $RT_{S_1}$ and $RT_{S_2}$ as in Figure~\ref{rootedTreePi3}. Note that we label the chains 
$\hat{0}< 12/3 < 123$, $\hat{0}< 13/2< 123$ and $\hat{0}<1/23 < 123$ in $S_1$ and $S_2$ all by $123$ in $RT_{S_1}$ and $RT_{S_2}$ since each of these chains terminates at $123$.

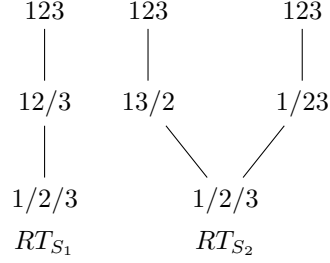
\begin{figure}
\begin{center}
\begin{tikzpicture}
\tikzstyle{elt}=[rectangle]
\matrix{\node(123){ $123$};\\\\[20pt] 
\node(12){$12/3$};\\[20pt] 
\node(zero){$1/2/3$};\\
\node(text){$RT_{S_1}$};\\
};
\draw(zero)--(12);
\draw(12)--(123);
\end{tikzpicture}
\begin{tikzpicture}
\tikzstyle{elt}=[rectangle]
\matrix{
\node(ab){$123$};&&\node(ac){$123$};\\\\[20pt] 
\node(b0){$13/2$}; & &;\node(c0){$1/23$};\\\\[20pt]
&;\node(0)[elt]{$1/2/3$};&;\\
&;\node(text1){$RT_{S_2}$};&;\\
};
\draw(0)--(b0);
\draw(0)--(c0);
\draw(b0)--(ab);
\draw(c0)--(ac);
\end{tikzpicture}
\end{center}
\caption{Hasse diagrams for rooted trees }\label{rootedTreePi3}
\end{figure}

In the previous sections, we used a partition of the atom set to form claws. In this section, we will use the partition of the atom set to form rooted trees. Given an ordered partition of the atoms of a lattice $(A_1,A_2, \dots,A_n)$, for each $i$ we form the rooted tree $RT_{\hat{U}(A_i)}$ where $\hat{U}(A_i)$ is the upper order ideal generated by $A_i$ together with $\hat{0}$. Note that since $(A_1,A_2,\dots,A_n)$ is a partition of the atoms, every element of the lattice appears in an $RT_{\hat{U}(A_i)}$ for some $i$. 

Given $(A_1,A_2,\dots,A_n)$, we call $\bfm{t}\in \prod_{i=1}^n RT_{\hat{U}(A_i)}$ a \emph{transversal}. We will use the notation,
$$
\mathcal{T}_x= \left\{\bfm{t}\in\prod_{i=1}^n RT_{\hat{U}(A_i)}: \bigvee \bfm{t}=x\right \}
$$
and call such elements \emph{transversals of $x$}. If $\bfm{t}$ consists of only atoms of $L$ or $\hat{0}$ then $\bfm{t}$ is called an \emph{atomic transversal}. This agrees with the terminology we used for claws. The set of atomic transversals for $x$ will be denoted $\mathcal{T}_x^A$ as before.

There is very little change in the approach  using rooted trees as opposed to claws. As before, given a partition $(A_1,A_2, \dots, A_n)$ of the atom set of $L$, we will put the standard equivalence relation on $ \prod_{i=1}^n RT_{\hat{U}(A_i)}$. Note that one can take the join using all the elements of a chain or just the top element as the results will be equal. Since we are using rooted trees, the natural map from $ \left(\prod_{i=1}^n RT_{\hat{U}(A_i)}\right)/\sim$ to $L$ is automatically surjective. In other words, we can remove the condition that every element of $L$ has an atomic transversal. Additionally, since the M\"obius function of a tree is zero everywhere except in $\hat{0}$ and its atoms, when we take the product of the trees, the M\"obius value of any transversal which is not atomic is zero and so does not affect $\chi$. Therefore, we get the following improvement on Theorem~\ref{bigThmAtomicVer}.

\begin{thm}\label{bigThm}
Let $L$ be a lattice and let $(A_1, A_2, \dots, A_n)$ be an ordered partition of the atoms of $L$. Let $\sim$ be the standard equivalence relation on $\prod_{i=1}^n RT_{\hat{U}(A_i)}$. Suppose the following hold: 
\begin{enumerate} 
\item[(1)] If $\bfm{t}\in \mathcal{T}_x^A$, then $|\supp \bfm{t}|=\rho(x)$.
\item[(2)] For each nonzero $x\in L$ there is some $i$ with $|A_i \cap A_x |=1.$
\end{enumerate}
Then we can conclude the following.
\begin{itemize}
\item[(a)] For all $x\in L$, $\mu(x) = (-1)^{\rho(x)}|\mathcal{T}_x^A|$.
\item[(b)] $\chi(L,t) = t^{\rho(L)-n}\displaystyle\prod_{i=1}^n (t-|A_i|)$.
\end{itemize}
\end{thm}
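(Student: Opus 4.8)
The plan is to run the proof of Theorem~\ref{bigThmAtomicVer} almost verbatim, with $P=\prod_{i=1}^{n}RT_{\hat{U}(A_i)}$ in place of the product of claws and $\sim$ the standard equivalence relation, adjusting only for the two ways rooted trees differ from claws. First I would establish the rooted-tree analogue of Lemma~\ref{uniformRankImpliesHomogenQuotLem}. Its part (a) becomes $L(\mathcal{T}_x)=\{\bfm{t}:t_i\le x\text{ for all }i\}$: the inclusion $\subseteq$ is immediate from the definition of $\sim$, and for $\supseteq$, given $\bfm{t}$ with all $t_i\le x$, pick an atom $a\le x$ lying in some $A_j$ and replace the $j$-th coordinate by a saturated $\hat{0}$--$x$ chain through $a$ that extends $t_j$, leaving the other coordinates (which already sit in $[\hat{0},x]$) untouched; the result is a transversal of $x$ lying above $\bfm{t}$. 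The same ``extend one coordinate up to $x$'' move shows $\left(\prod_i RT_{\hat{U}(A_i)}\right)/\sim$ is a homogeneous quotient: $\hat{0}$ is alone in its class since $\bigvee\bfm{t}=\hat{0}$ forces $\bfm{t}=(\hat{0},\dots,\hat{0})$, and if $\mathcal{T}_x\le\mathcal{T}_y$ then $x\le y$, so each $\bfm{t}\in\mathcal{T}_x$ lies in $L(\mathcal{T}_y)$ and hence under some element of $\mathcal{T}_y$. Finally, in $\sum_{\bfm{t}\in L(\mathcal{T}_x)}\mu(\bfm{t})$ only atomic transversals contribute, because the M\"obius function of a rooted tree vanishes off $\hat{0}$ and its atoms, so the M\"obius value in $P$ of a non-atomic transversal is $0$; counting atomic transversals in $L(\mathcal{T}_x)$ by support size exactly as in Lemma~\ref{uniformRankImpliesHomogenQuotLem}(c) gives $\sum_{\bfm{t}\in L(\mathcal{T}_x)}\mu(\bfm{t})=\prod_i\bigl(1-|A_i\cap A_x|\bigr)$, which vanishes for every nonzero $x$ exactly when assumption (2) holds.

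Next I would prove $L\cong P/\sim$. Define $\varphi(\mathcal{T}_x)=x$ and $\psi(x)=\mathcal{T}_x$; then $\varphi$ is well defined by the definition of $\sim$, and $\psi$ is well defined because $\mathcal{T}_x\neq\emptyset$ automatically for rooted trees: for $x\neq\hat{0}$ choose an atom $a\le x$ in some $A_j$ and a saturated $\hat{0}$--$x$ chain through $a$; this chain lies entirely in $\hat{U}(A_j)$, so it is an element of $RT_{\hat{U}(A_j)}$, and placing it in coordinate $j$ with $\hat{0}$ elsewhere yields a transversal of $x$. The maps $\varphi$ and $\psi$ are mutually inverse, and both are order preserving by the argument of Lemma~\ref{uniformRankImpliesHomogenQuotLem}(b), the direction $x\le y\Rightarrow\mathcal{T}_x\le\mathcal{T}_y$ using the description of $L(\mathcal{T}_y)$ from the previous paragraph.

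Now for the conclusions. By the first paragraph Lemma~\ref{sumLem} applies to $P/\sim$, so transporting along the isomorphism,
$$\mu(x)=\mu(\mathcal{T}_x)=\sum_{\bfm{t}\in\mathcal{T}_x}\mu(\bfm{t})=\sum_{\bfm{t}\in\mathcal{T}_x^A}(-1)^{|\supp\bfm{t}|}=(-1)^{\rho(x)}|\mathcal{T}_x^A|,$$
the third equality because non-atomic transversals contribute $0$ and the last by assumption (1); this is (a). For (b), observe that $\mathcal{T}_x^A$ is precisely the set of atomic transversals of $x$ inside the product of claws $\prod_{i=1}^{n}CL_{A_i}$, whose characteristic polynomial is $\prod_i(t-|A_i|)$ by Lemma~\ref{chiLem}(1) together with $\chi(CL_m,t)=t-m$. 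Writing out that characteristic polynomial from the definition and grouping terms according to the join of the transversal, then using assumption (1) to replace each $|\supp\bfm{t}|$ by $\rho(x)$, gives
$$\prod_{i=1}^{n}(t-|A_i|)=\sum_{x\in L}(-1)^{\rho(x)}|\mathcal{T}_x^A|\,t^{\,n-\rho(x)}.$$
Selecting one atom from each $A_i$ produces an atomic transversal of support $n$, so assumption (1) forces $\rho(L)\ge n$; multiplying the last identity by $t^{\rho(L)-n}$ and comparing with $\chi(L,t)=\sum_x\mu(x)t^{\rho(L)-\rho(x)}$ via (a) yields (b).

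The step that genuinely departs from the proof of Theorem~\ref{bigThmAtomicVer}, and the one to get right, is the derivation of (b). Here $\sim$ identifies transversals of different ranks in $P$ (a length-$2$ chain in one coordinate and two atoms spread over two coordinates can both have a join of rank $2$), so the hypothesis ``$\bfm{s}\sim\bfm{t}\Rightarrow\rho(\bfm{s})=\rho(\bfm{t})$'' of Corollary~\ref{chiQuoCor} fails and $\chi(P,t)\neq\chi(P/\sim,t)$ in general. Consequently (b) cannot simply be read off from $\chi(P,t)$: one must first compute $\mu_L$ using the vanishing of the tree M\"obius function (which is also what reduces the summation condition in the first paragraph to its claw form), and only then recover $\chi(L,t)$ through the auxiliary computation in the product of claws — which is exactly where the extra factor $t^{\rho(L)-n}$ enters.
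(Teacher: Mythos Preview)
Your proposal is correct, and the overall architecture matches the paper's proof: establish $L(\mathcal{T}_x)$, verify that the quotient is homogeneous, check the summation condition reduces to assumption~(2) because non-atomic transversals have M\"obius value zero, prove $L\cong P/\sim$, and then read off (a) from Lemma~\ref{sumLem}. Your chain-extension argument for $L(\mathcal{T}_x)\supseteq\{\bfm{t}:t_i\le x\}$ is slightly more explicit than the paper's (which just writes $\bfm{t}(y^i)$, abusing the chain/top-element identification), and is fine once one notes that any extension of a nontrivial $t_j$ up to $x$ automatically stays in $\hat{U}(A_j)$, while if $t_j=\hat{0}$ one starts with an atom of $A_j\cap A_x$.

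The one genuine divergence is in (b). The paper introduces the auxiliary Laurent polynomial $\bar{\chi}(P,t)=\sum_x\mu(x)t^{-\rho(x)}$, checks that $\bar{\chi}$ is unchanged by the quotient (because only atomic transversals carry M\"obius weight, and these have $\rho(\bfm{t})=|\supp\bfm{t}|=\rho(x)$ by assumption~(1)), computes $\bar{\chi}$ for a rooted tree, and multiplies by $t^{\rho(L)}$ at the end. You instead return to the product of claws $\prod_i CL_{A_i}$, expand $\chi(\prod_i CL_{A_i},t)=\prod_i(t-|A_i|)$ directly, group by $x=\bigvee\bfm{t}$, and use assumption~(1) together with the already-proved (a) to identify the result with $t^{n-\rho(L)}\chi(L,t)$. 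Both routes exploit the same two facts---only atomic transversals matter, and they sit at rank $\rho(x)$---so they are equivalent in content; your version is a touch more direct since it avoids introducing $\bar{\chi}$, while the paper's $\bar{\chi}$ makes the ``rank mismatch'' issue (which you correctly diagnose in your final paragraph) invisible by design.
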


\begin{proof}
Let $P= \prod_{i=1}^n RT_{\hat{U}(A_i)}$. We need to show that $P/\sim$ is homogeneous. 
The first condition of the definition is obvious.  For the second, suppose that $\mathcal{T}_x\leq\mathcal{T}_y$ and  $\bfm{t}\in \mathcal{T}_x$.   It follows that $x\leq y$. We need to show that there exists some $\bfm{s}\in \mathcal{T}_y$ such that $\bfm{t}\leq \bfm{s}$.   Let $i$ be an index such that $A_i\cap A_y\neq \emptyset$ so that $y\in\hat{U}(A_i)$. If $\bfm{t}\in \mathcal{T}_x$, then $t_j\leq x\leq y$ for all $j$. Therefore, $\bfm{t}(y^i)\in \mathcal{T}_y$ and $\bfm{t}\leq \bfm{t}(y^i)$. It follows that $P/\sim$ is homogeneous.

In the proof of Theorem~\ref{bigThmAtomicVer}, we showed that the lattice and the quotient of the product of claws were isomorphic. The proof that $L$ and $P/\sim$ are isomorphic is essentially the same. If we define $\varphi$ and $\psi$ analogously, then the only difference is showing $\psi$ is order preserving in which case one can use the same ideas as in the previous paragraph to complete the demonstration.

Now we verify that the summation condition~\ree{sumCondEq} holds for all nonzero elements of $P/\sim$. We only need to modify the proof that we gave in Lemma~\ref{uniformRankImpliesHomogenQuotLem} part (c) slightly. Analogously to the proof of part (a) of that lemma, one sees that $L(\mathcal{T}_x) = \{\bfm{t} : t_i\leq x \mbox{ for all } i\}$.
Using this and the fact that only atomic transversals have nonzero M\"obius values, the proof of Lemma~\ref{uniformRankImpliesHomogenQuotLem} part (c) goes through as before with $\mathcal{T}_x^A$ replaced by $\mathcal{T}_x$.

Now applying Lemma~\ref{sumLem} and the fact that $\mu(\bfm{t})=0$ if $\bfm{t}$ is not atomic, we get
\begin{equation}\label{sumOfMuRootedTreesEq}
\mu(\mathcal{T}_x) = \sum_{\bfm{t} \in \mathcal{T}_x} \mu(\bfm{t})= \sum_{\bfm{t} \in \mathcal{T}_x^A} \mu(\bfm{t}).
\end{equation}
Then applying the same proof as in Theorem~\ref{bigThmAtomicVer} gives us (a).

To finish the proof we define a modification of the characteristic polynomial for any ranked poset $P$,
$$
\bar{\chi}(P,t) = \sum_{x\in P} \mu(x) t^{-\rho(x)}.
$$
We claim that $\bar{\chi}(P,t)=\bar{\chi}(P/\sim,t)$. Applying assumption (1) and the isomorphism $L\cong P/\sim$, we get that for every $\bfm{t}\in\mathcal{T}_x^A$ we have
$$
\rho(\bfm{t})=|\supp\bfm{t}|=\rho(x)=\rho(\mathcal{T}_x).
$$
This combined with equation~\ree{sumOfMuRootedTreesEq}, proves the claim.

Now if $RT$ is a rooted tree with $k$ atoms then $\bar{\chi}(RT,t)=t^{-1}(t-k)$. It follows that 
$$
\bar{\chi}(P,t)= t^{-n}\prod_{i=1}^n(t-|A_i|).
$$
Since $\bar{\chi}$ is preserved by isomorphism, 
$$
\bar{\chi}(L,t) =\bar{\chi}(P/\sim,t)=\bar{\chi}(P,t) = t^{-n}\prod_{i=1}^n(t-|A_i|).
$$
Multiplying by $t^{\rho(L)}$ gives us part (b).
\end{proof}

\section{Partitions Induced by a Multichain}
\label{pic}

It turns out that under certain circumstances we can show that assumption (2) of Theorem~\ref{bigThm} and factorization of the characteristic polynomial are equivalent. To be able to prove this equivalence, we will not be able to take an arbitrary partition of the atoms, but rather we will need the partition to be induced by a multichain in the lattice. 

If $L$ is a lattice and  $C: \hat{0}=x_0\leq x_1\leq\dots \leq x_n=\hat{1}$ is a $\hat{0}$--$\hat{1}$ multichain of $L$ we get an ordered partition $(A_1,A_2,\dots,A_n)$ of the atoms of $L$ by defining the set $A_i$ as
\begin{displaymath}
A_i = \{a \in A(L) \mid \hspace{ 2 pt} a\leq x_i \mbox{ and } a \nleq x_{i-1}\}.
\end{displaymath}
In this case we say $(A_1, A_2, \dots, A_n)$ is \emph{induced} by the multichain $C$. Note that we do not insist that our multichain  be a chain nor does it need to be saturated as is usually done in the literature. Partitions induced by multichains have several nice properties. The first property will apply to any lattice (Lemma~\ref{chainOneNi}), but for the second we will need the lattice to be semimodular (Lemma~\ref{chainIndSet}). Before we get to these properties, we need a modification of Lemma~\ref{sumLem}.

\begin{lem}\label{maxElemQuotient}
Suppose that $P/\sim$ is a homogeneous quotient and that for all non-maximal, nonzero $X \in P/\sim$ we have that 
$$
\sum_{y\in L(X)} \mu(y) =0
$$
Then for all $X \in P/\sim$ 
$$
\mu(X) =
\begin{cases}
\displaystyle\sum_{x\in X} \mu(x) & \text{if $X$ is not maximal,} \\\\ \displaystyle\sum_{x\in X} \mu(x) - \sum_{y\in L(X)}\mu(y) &\text{if $X$ is maximal. }
\end{cases}
$$
\end{lem}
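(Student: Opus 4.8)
The plan is to mimic the proof of Lemma~\ref{sumLem} almost verbatim, tracking the one place where the hypothesis has been weakened from ``all nonzero $X$'' to ``all non-maximal nonzero $X$''. I would induct on the length of the longest $\hat{0}$--$X$ chain in $P/\sim$. The base case $X=\hat{0}$ is identical: by homogeneity $X=\{\hat{0}\}$ and $\mu(X)=1=\mu(\hat{0})$, and since $\hat{0}$ is not maximal we are in the first case of the formula. For the inductive step with $X\neq\hat{0}$, I would compute $\mu(X)=-\sum_{Y<X}\mu(Y)$ and, exactly as in Lemma~\ref{sumLem}, use homogeneity to identify $\{\,Y : Y<X\,\}$ with the set of equivalence classes of elements of $L(X)\setminus X$, so that $\mu(X)=-\sum_{y\in L(X)\setminus X}\mu(y)$ via the induction hypothesis. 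The subtle point here is that the induction hypothesis is applied to classes $Y<X$, all of which are non-maximal (they are strictly below $X$), so each such $\mu(Y)$ equals $\sum_{y\in Y}\mu(y)$ with no correction term — this is exactly why the weakened hypothesis suffices and why the correction only ever appears at the very top of the poset.

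Having established $\mu(X)=-\sum_{y\in L(X)\setminus X}\mu(y)$, I would split into the two cases. If $X$ is not maximal, then by hypothesis $\sum_{y\in L(X)}\mu(y)=0$, i.e. $\sum_{y\in X}\mu(y)=-\sum_{y\in L(X)\setminus X}\mu(y)$, which is precisely $\mu(X)$; this is the first case of the claimed formula. If $X$ is maximal, the hypothesis gives us nothing about $\sum_{y\in L(X)}\mu(y)$, so we simply rewrite
\[
\mu(X) = -\sum_{y\in L(X)\setminus X}\mu(y) = \sum_{y\in X}\mu(y) - \sum_{y\in L(X)}\mu(y),
\]
which is the second case.

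There is one genuine gap to close carefully: in Lemma~\ref{sumLem} the step $\mu(X)=-\sum_{y\in L(X)\setminus X}\mu(y)$ relied only on homogeneity plus the induction hypothesis, not on the summation condition, so it transfers here without change provided the induction hypothesis is available for every $Y<X$. I would therefore state explicitly at the start of the induction that I am proving the displayed piecewise formula for all $X$, and note that since every $Y$ with $Y<X$ is non-maximal, the induction hypothesis supplies $\mu(Y)=\sum_{y\in Y}\mu(y)$ for exactly the terms needed. The main (and really the only) obstacle is this bookkeeping — making sure the correction term never propagates into the recursion — and it is resolved by the observation that a class strictly below $X$ cannot be maximal.
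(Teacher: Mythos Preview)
Your proposal is correct and takes essentially the same approach as the paper: both arguments observe that equation~\eqref{sumOfMu2}, namely $\mu(X)=-\sum_{y\in L(X)\setminus X}\mu(y)$, is obtained in the proof of Lemma~\ref{sumLem} using only homogeneity and the induction hypothesis (applied to the necessarily non-maximal classes $Y<X$), and then split into the two cases exactly as you do. The paper simply says ``the proof of Lemma~\ref{sumLem} goes through as before'' where you write out the induction in full; one small caveat is the degenerate case where $\hat{0}$ is itself maximal (i.e.\ $P/\sim$ is trivial), which the paper addresses separately and which your base case tacitly excludes.
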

\begin{proof}

If $X$ is not maximal, then the proof of Lemma~\ref{sumLem} goes through as before.

Now suppose that $X$ is maximal. If $X=\hat{0}$ then the result holds since $P/\sim$ is trivial.   So suppose $X\neq\hat{0}$. In the proof of Lemma~\ref{sumLem}, we derived equation~\ree{sumOfMu2} without using the summation condition~\ree{sumCondEq} and so it still holds. Moreover, it is easy to see that this equation is equivalent to the one for maximal $X$ in the statement of the current result.
\end{proof}

Given a lattice and a partition of the atoms, it will be useful to know when elements of a lattice do not satisfy condition (2) of Theorem~\ref{bigThm}. This is possible to do when the partition of the atoms is induced by a multichain.

\begin{lem}\label{chainOneNi}
Let $L$ be a lattice and let $(A_1, A_2, \dots, A_n)$ be induced by a multichain $C: \hat{0}=x_0\leq x_1\leq\dots \leq x_n=\hat{1}$.  Let $N_i$ be the number of atoms below an element $x\in L$ in $A_i$. If $N_i\neq 1$ for all $i$ and $x\neq \hat{0}$ is minimal with respect to this property, then for all but one $i$, $N_i=0$. 
\end{lem}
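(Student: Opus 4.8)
The plan is to argue by contradiction, using the multichain to ``truncate'' $x$: I will replace $x$ by $x\wedge x_m$ for a well-chosen member $x_m$ of $C$ and show that this produces a strictly smaller nonzero element which still has the property ``$N_i\ne 1$ for all $i$'', contradicting the minimality of $x$.

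First I would record the one computational fact that drives everything. For any $z\in L$ and any index $m$ with $0\le m\le n$, write $N_i'=|A_i\cap A_{z\wedge x_m}|$; then $N_i'=|A_i\cap A_z|$ for $i\le m$ and $N_i'=0$ for $i>m$. Indeed, an atom $a$ lies below $z\wedge x_m$ exactly when $a\le z$ and $a\le x_m$. If $a\in A_i$ with $i\le m$, then $a\le x_i\le x_m$ automatically, so the condition $a\le x_m$ is vacuous and $A_i\cap A_{z\wedge x_m}=A_i\cap A_z$. If $a\in A_i$ with $i>m$, then $a\nleq x_{i-1}$ while $x_m\le x_{i-1}$, so $a\nleq x_m$ and no atom of $A_i$ survives. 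Thus passing from $z$ to $z\wedge x_m$ keeps the counts $N_1,\dots,N_m$ and zeroes out $N_{m+1},\dots,N_n$.

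Now suppose, for contradiction, that at least two of the numbers $N_1,\dots,N_n$ attached to $x$ are nonzero; since by hypothesis no $N_i$ equals $1$, each nonzero one is at least $2$. Let $k$ be the largest index with $N_k\ne0$, and pick an index $j<k$ with $N_j\ne0$. Set $y=x\wedge x_{k-1}$. By the truncation fact, the counts for $y$ are $N_i$ for $i\le k-1$ and $0$ for $i\ge k$, so every count for $y$ is either $0$ or one of the original $N_i\ne1$; hence $y$ again satisfies ``$N_i\ne1$ for all $i$''. Since $N_j\ge2$ remains among the counts for $y$, $y$ lies above an atom and so $y\ne\hat0$. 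Finally $y<x$ strictly: if instead $y=x$ then $x\le x_{k-1}$, so every atom below $x$ lies below $x_{k-1}$, contradicting the existence (guaranteed by $N_k\ne0$) of an atom $a\le x$ with $a\in A_k$, i.e.\ $a\nleq x_{k-1}$. Thus $y$ is a nonzero element strictly below $x$ that still has the defining property, contradicting the minimality of $x$. Therefore at most one $N_i$ is nonzero, which is exactly the claim.

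I do not anticipate a real obstacle here: the whole proof rests on the truncation computation of the previous paragraph, and the only point demanding care is using \emph{both} defining inequalities of $A_i$ (namely $a\le x_i$ and $a\nleq x_{i-1}$) correctly when intersecting with the principal ideal below $x_m$. Once that is in place the contradiction is immediate, and no use of the M\"obius function is needed — this lemma is purely about the combinatorics of the induced partition.
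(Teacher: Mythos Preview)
Your proof is correct. Both you and the paper argue by contradiction, constructing a nonzero $y<x$ that still has the property ``$N_i\neq 1$ for all $i$'', but the constructions are dual. The paper takes $k$ to be the \emph{smallest} index with $N_k\neq 0$, sets $B=A_k\cap A_x$, and lets $y=\bigvee B$; one then checks that $A_y=B$ (using $y\le x_k$ and the minimality of $k$) so that $y$ has a single nonzero count $|B|\ge2$ and is strictly below $x$ because $A_x\supsetneq B$. You instead take $k$ to be the \emph{largest} such index and set $y=x\wedge x_{k-1}$, using your truncation fact to read off the counts of $y$ directly. Your approach has the advantage that the truncation identity $A_i\cap A_{z\wedge x_m}=A_i\cap A_z$ for $i\le m$ (and $=\emptyset$ for $i>m$) is a clean general statement that makes the verification mechanical; the paper's join-of-atoms construction is slightly more ad hoc but yields a $y$ with only one nonzero block, which is a marginally stronger intermediate conclusion. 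Either way the contradiction follows immediately.
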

\begin{proof}
Suppose that $x$ is minimal, but that $N_i>1$ for at least two $i$. Let $k$ be the smallest index with $N_k\neq0$, and $B\subseteq A_k$ be the atoms below $x$ in $A_k$ so $|B|\ge2$. Let $y=\bigvee B$. So, by the choice of $B$, $y\le x_k$ which implies that the atoms below $y$ are in $A_i$ for $i\leq k$. So the choice of $A_k$ forces the set of atoms below $y$ to be $B$ which is a proper subset of the set of atoms below $x$, and thus $y<x$. Since $|B|\ge2$, this contradicts the choice of $x$.
\end{proof}

The next definition gives one of the conditions equivalent to factorization when the atom partition is induced by a multichain.

\begin{defi}
Let $L$ be a lattice and let $C: \hat{0}=x_0\leq x_1\leq\dots \leq x_n=\hat{1}$ be a $\hat{0}$--$\hat{1}$ multichain. For atomic $x\in L$, $x$ neither $\hat{0}$ nor an atom, let $i$ be the index such that $x\leq x_{i}$ but $x\not\leq x_{i-1}$. We say that $C$ satisfies the \emph{meet condition} if, for each such $x$, we have $x\wedge x_{i-1} \neq \hat{0}$. 
\end{defi}

We are now in a position to give a list of equivalent conditions to factorization.

\begin{thm}\label{equivPropFactorSemimod}
Let $L$ be a lattice and let $(A_1, A_2, \dots, A_n)$ be induced by a 
 $\hat{0}$--$\hat{1}$ multichain, $C$.
 Suppose that, for each $y\in L$, if $\bfm{t}\in\mathcal{T}_y^A$, then 
$$
|\supp\bfm{t}|=\rho(y).
$$
Under these conditions the following are equivalent.
\begin{enumerate}
\item For every nonzero $x\in L$, there is an index $i$ such that $|A_i\cap A_x|=1$.
\item For every element $x\in L$ which is the join of two elements from the same $A_j$, there is an index $i$ such that $|A_i\cap A_x|=1$.
\item The multichain $C$ satisfies the meet condition.
\item We have that
\begin{displaymath}
\chi(L,t) = t^{\rho(L)-n} \prod_{i=1}^{n} (t-|A_i|).
\end{displaymath}
\end{enumerate}
\end{thm}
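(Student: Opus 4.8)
The plan is to prove the cycle of implications $(1)\Rightarrow(2)\Rightarrow(3)\Rightarrow(1)$ together with $(1)\Leftrightarrow(4)$, using Theorem~\ref{bigThm} and the structural Lemma~\ref{chainOneNi} as the main engines. The equivalence $(1)\Leftrightarrow(4)$ is the easiest: $(1)\Rightarrow(4)$ is exactly the conclusion of Theorem~\ref{bigThm}, since the hypothesis on $|\supp\bfm{t}|=\rho(y)$ is assumed throughout. For $(4)\Rightarrow(1)$, I would argue by contrapositive using the modified M\"obius machinery: if $(1)$ fails, pick $x\neq\hat 0$ minimal with $N_i\neq 1$ for all $i$ (here $N_i=|A_i\cap A_x|$). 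By Lemma~\ref{chainOneNi}, all but one $N_i$ vanish, say only $N_k>1$, so $\prod_i(1-N_i)=1-N_k\neq 0$, meaning the summation condition fails precisely at $\mathcal{T}_x$. By minimality it holds below $x$, so Lemma~\ref{maxElemQuotient} (applied to the interval or to the relevant sub-quotient) shows $\mu(\mathcal{T}_x)$ picks up the extra term $-\sum_{y\in L(\mathcal{T}_x)}\mu(y)=-(1-N_k)$, which forces $\mu(x)=(-1)^{\rho(x)}|\mathcal{T}_x^A|+(-1)^{\rho(x)}(N_k-1)$, giving $|\mu(x)|>|\mathcal{T}_x^A|\ge$ the count predicted by a clean factorization. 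Comparing with the coefficient count forced by $t^{\rho(L)-n}\prod(t-|A_i|)$ yields a contradiction; the cleanest packaging is probably to compare $\bar\chi(L,t)$ term by term, since a genuine product formula pins down every $\mu(x)$ as $(-1)^{\rho(x)}e_{\rho(x)}(\dots)$-type quantity and the discrepancy above violates that.

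Next, $(1)\Rightarrow(2)$ is trivial since $(2)$ is a special case of $(1)$ restricted to joins of two atoms from a common $A_j$. The interesting step is $(2)\Rightarrow(3)$. Given $(2)$, I want the meet condition: for atomic $x$ that is neither $\hat 0$ nor an atom, with $i$ the index such that $x\le x_i$, $x\not\le x_{i-1}$, I must show $x\wedge x_{i-1}\neq\hat 0$. Suppose not, so $x\wedge x_{i-1}=\hat 0$. The point is that every atom below $x$ must lie in some $A_j$ with $j\le i$ (since $x\le x_i$), and an atom $a\le x$ lies in $A_{i}$ iff $a\le x_i$ but $a\not\le x_{i-1}$; if $a\le x$ and $a\le x_{i-1}$ then $a\le x\wedge x_{i-1}=\hat 0$, impossible. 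Hence \emph{every} atom below $x$ lies in $A_i$, i.e.\ $A_x\subseteq A_i$ and $|A_i\cap A_x|=|A_x|\ge 2$ because $x$ is atomic of rank $\ge 2$ (the rank hypothesis plus $x$ atomic forces at least two atoms below $x$). So $x$ is (essentially) a join of two atoms from the same $A_i$, and $(2)$ demands some index $\ell$ with $|A_\ell\cap A_x|=1$ — but all atoms of $x$ sit in $A_i$, so the only candidate is $\ell=i$, forcing $|A_i\cap A_x|=1$, contradicting $|A_x|\ge 2$. Hence the meet condition holds.

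Finally, $(3)\Rightarrow(1)$: assume the meet condition and suppose $(1)$ fails; take $x\neq\hat 0$ minimal with $N_j\neq 1$ for all $j$, where $N_j=|A_j\cap A_x|$. By Lemma~\ref{chainOneNi}, only one $N_k$ is nonzero and $N_k\ge 2$, so $A_x\subseteq A_k$; in particular $x$ is atomic (it is a join of atoms all lying in $A_k$) with $\rho(x)\ge 2$ by the rank hypothesis, so $x$ is neither $\hat 0$ nor an atom. Let $i$ be the index with $x\le x_i$, $x\not\le x_{i-1}$; I claim $i=k$. Indeed $A_x\subseteq A_k$ means each atom $a\le x$ satisfies $a\le x_k$, so $x=\bigvee A_x\le x_k$, giving $i\le k$; and some atom $a\le x$ has $a\in A_k$, so $a\not\le x_{k-1}$, whence $x\not\le x_{k-1}$, giving $i\ge k$. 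Now the meet condition says $x\wedge x_{k-1}\neq\hat 0$, so there is an atom $b\le x\wedge x_{k-1}$, in particular $b\le x$ and $b\le x_{k-1}$; but then $b\in A_j$ for some $j\le k-1$, contradicting $A_x\subseteq A_k$. This contradiction establishes $(1)$.

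The step I expect to be the main obstacle is $(4)\Rightarrow(1)$: one has to extract a genuine contradiction from a false factorization without the clean quotient isomorphism that drives Theorem~\ref{bigThm}, and the bookkeeping requires Lemma~\ref{maxElemQuotient} applied to a carefully chosen homogeneous quotient (for instance the quotient of $\prod RT_{\hat U(A_i)}$ restricted to the order ideal below $\mathcal{T}_x$, where $x$ is the minimal offender), then matching up the resulting M\"obius discrepancy with the coefficients that a true product formula would force via $\bar\chi$. All the other implications are essentially structural consequences of Lemma~\ref{chainOneNi} and the definition of the meet condition, so the real work is localizing the failure of $(1)$ to a single minimal element and tracking the extra M\"obius contribution precisely.
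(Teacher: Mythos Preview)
Your overall cycle matches the paper's, and most of your steps are sound, but three points need fixing.

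\textbf{In $(2)\Rightarrow(3)$:} You show that every atom below $x$ lies in $A_i$, hence $|A_x|\ge 2$, and then apply $(2)$ directly to $x$. But $(2)$ is only assumed for elements that \emph{are} the join of two atoms from the same block, and your atomic $x$ need not be such a join (it may require more than two atoms). The fix is the paper's: choose distinct $a,b\in A_x\subseteq A_i$, set $y=a\vee b$; then $A_y\subseteq A_x\subseteq A_i$, so $(2)$ applied to $y$ forces $|A_i\cap A_y|=1$, contradicting $a,b\in A_y$.

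\textbf{In $(3)\Rightarrow(1)$:} Your argument by minimal offender is a legitimate alternative to the paper's induction on the chain index, but your parenthetical ``it is a join of atoms all lying in $A_k$'' is not a proof that $x$ is atomic. You need minimality: let $z=\bigvee A_x$, note $A_z=A_x$ (so $z$ also violates $(1)$), and conclude $z=x$ by minimality, hence $x$ is atomic. Once this is supplied, the rest of your argument goes through.

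\textbf{In $(4)\Rightarrow(1)$:} Your instinct (contrapositive via a minimal offender and Lemma~\ref{maxElemQuotient}) is exactly right, but the sentence ``a genuine product formula pins down every $\mu(x)$'' is false: the factorization determines only the \emph{sum} of $\mu$ over each rank level, not individual values. Knowing $\mu(x)$ for one minimal $x$ cannot by itself contradict $(4)$. The paper therefore works at the level of the minimal \emph{rank} $k$ at which $(1)$ fails: truncate $P/\sim$ at rank $k$; by minimality the summation condition holds for every nonzero non-maximal class, so Lemma~\ref{maxElemQuotient} applies to every rank-$k$ class. Summing over rank $k$, the classes satisfying $(1)$ contribute exactly the rank-$k$ sum from $P$, while each offender contributes an extra $-(1-N_k)>0$ (Lemma~\ref{chainOneNi} guarantees exactly one $N_k\ge 2$). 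Hence the coefficient of $t^{\rho(L)-k}$ in $\chi(L,t)$ differs from that in $t^{\rho(L)-n}\prod(t-|A_i|)$. Also, your displayed formula for $\mu(x)$ has a sign slip: the correction term is $+(N_k-1)$, not $(-1)^{\rho(x)}(N_k-1)$.
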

\begin{proof}

$(1)\Rightarrow(4)$ This is Theorem~\ref{bigThm}.

$(4)\Rightarrow (2)$ 
We actually show that $(4)\Rightarrow (1)$ (the fact that $(1)\Rightarrow (2)$ is trivial). We do so by proving the contrapositive. By assumption, there must be a nonzero $x\in L$ such that for each $i$ the number of atoms below $x$ in $A_i$ is different from one.
Let $k$ be the smallest value of $\rho(x)$ for which elements of $L$ have this property. We show that the coefficients of $t^{\rho(L)-k}$ in $\chi(L,t)$ and in $\chi(P,t)=t^{\rho(L)-n}\prod_{i=1}^n (t- |A_i|)$ are different, where $P= \prod_{i=1}^n RT_{\hat{U}(A_i)}$. Using the same proof as we did in Theorem~\ref{bigThm}, we can show that $L\cong P/\sim$. So it suffices to show that the coefficient of $t^{\rho(L)-k}$ in $\chi(P/\sim,t)$ is different from the coefficient in $\chi(P,t)$.

Let $Q$ be the poset obtained by removing all the elements of $P/\sim$ which have rank more than $k$. Let $x_1,x_2,\dots, x_l$ be the elements of $L$ at rank $k$ such that the number of atoms below $x_i$ in each block of the partition is different from one. Then by Lemma~\ref{chainOneNi}, each $x_i$ has atoms above exactly one block. Now let $S=\{\mathcal{T}_{x_1},\mathcal{T}_{x_2}, \dots, \mathcal{T}_{x_l}\}$ be the set of the corresponding transversals. In $Q$, the elements of $S$ are maximal and all the other non-maximal elements in $Q$ satisfy the hypothesis of Lemma~\ref{maxElemQuotient} which can be verified as in the proof of Theorem~\ref{bigThm}. Therefore we can calculate the M\"obius values of the elements of rank $k$ in $Q$ using Lemma~\ref{maxElemQuotient}. Once we know these values we can find the coefficient of $t^{\rho(L)-k}$ in $\chi(P/\sim,t)$.

Each $x_i$ is above at least two atoms and is above only atoms in one block. Therefore the only atomic transversals which are in $ L(\mathcal{T}_{x_i})$ are transversals with single atoms and the transversal with only zeros. Since only atomic transversals have nonzero M\"obius values we get that for all elements of $S$,
$$
c_i\stackrel{\rm def}{=}\sum_{\bfm{t}\in L(\mathcal{T}_{x_i})} \mu(\bfm{t})=1-|A_{x_i}|<0.
$$
We know that $c_i<0$ since the number of atoms below each $x_i$ is at least two. Let $Q_k$ be the set of elements of $Q$ at rank $k$.
Using Lemma~\ref{maxElemQuotient}, we see that the sum of the M\"obius values of $Q_k$ is
\begin{align*}
\sum_{\mathcal{T}_x\in Q_k}\mu(\mathcal{T}_x) 
&=\sum_{i=1}^l \mu(\mathcal{T}_{x_i})+
\sum_{\mathcal{T}_x\in Q_k\setminus S}\mu(\mathcal{T}_x)\\[5pt]
&= \sum_{i=1}^l \left( \sum_{\bfm{t} \in \mathcal{T}_{x_i}} \mu(\bfm{t}) -c_i \right)+ 
\sum_{\mathcal{T}_x\in Q_k\setminus S} \left(\sum_{\bfm{t} \in \mathcal{T}_x} \mu(\bfm{t}) \right).
\end{align*}

As recently noted, only elements of $L$ which have atomic transversals have nonzero M\"obius values. Using this and the assumption that $|\supp\bfm{t}|=\rho(x)=\rho(\mathcal{T}_x)$, we get that the coefficient of  $t^{\rho(L)-k}$ in $\chi(P/\sim,t)$ is
$$
\sum_{|\supp\bfm{t}|=k} \mu(\bfm{t}) - \sum_{i=1}^l c_i
$$
where the first sum is over atomic $\bfm{t}$.
As we saw before, each $c_i$ is negative and all are nonzero and so the coefficient of $t^{\rho(L)-k}$ is different from 
$$
\sum_{|\supp\bfm{t}|=k} \mu(\bfm{t}) 
$$
which is the coefficient of $t^{\rho(L) -k}$ in $\chi(P,t)$. 
This completes the proof that $(4)\Rightarrow (2)$.

$(2) \Rightarrow (3)$ We show the contrapositive holds. Suppose that $C$ does not satisfy the meet condition. Then there is some atomic $x$ which is neither an atom nor $\hat{0}$ such that $x\leq x_i$, $x\not\leq x_{i-1}$, and $x\wedge x_{i-1}=\hat{0}$. It follows that $x$ is only above atoms in $A_i$. Since $x$ is atomic, but not an atom, there are at least two atoms, $a,b$ below $x$ in $A_i$. Let $y=a\vee b$. Since $y\leq x$, $y$ can only be above atoms in $A_i$. Therefore, for all indices $j$, $|A_j \cap A_y|\neq 1$ and $y$ is the join of two atoms.

$(3)\Rightarrow (1)$ First let us note that if $x$ is an atom then the result is obvious. For $x\in L$ let $i$ be the index such that $x\leq x_i$ and $x\not\leq x_{i-1}$. We now induct on $i$. If $i=1$ then it suffices to show that $|A_1|=1$ since then every nonzero $x\le x_1$ is only above the unique element of $A_1$.    However if $a,b$ are distinct atoms in  $A_1$ then $x=a\vee b$ is atomic but not an atom or zero.  Further  $x\le x_1$ but $x\wedge x_{i-1}=x\wedge\hat{0}=\hat{0}$ which contradicts the meet condition.  This finishes the $i=1$ case.

Now suppose that $i>1$ and $x$ is not an atom. Let $z=\bigvee A_x$. Then $z$ is atomic and $A_z=A_x$. Let $y=z\wedge x_{i-1}$. Since $C$ satisfies the meet condition, $y\neq \hat{0}$. By construction $y<x_{i-1}$ and so by induction, there is some index $j\leq i-1$ with $A_j\cap A_y=\{a\}$. Suppose that there was some other atom $b\in A_j \cap A_z$. Then $y\vee b$ is less than or equal to both $z$ and $x_{i-1}$ and so $y\vee b \leq z\wedge x_{i-1}=y$. However, this is impossible since then $A_j\cap A_y\supseteq \{a,b\}$. It follows that $1=|A_j\cap A_z|=|A_j\cap A_x|$ and so (1) holds.
\end{proof}

\begin{figure}
\begin{center}
\begin{tikzpicture}
\node (0) at (1,0) {$\hat{0}$};
\node (a) at (0,1) {$a$}; 
\node (b) at (2,1) {$b$}; 
\node (c) at (0,2) {$c$}; 
\node (d) at (2,2) {$d$}; 
\node (1) at (1,3) {$\hat{1}$}; 
\draw(0)--(a)--(c)--(1);
\draw(0)--(b)--(d)--(1);
\end{tikzpicture}
\caption{A lattice}\label{nonSemiLatticeFig}
\end{center}
\end{figure}
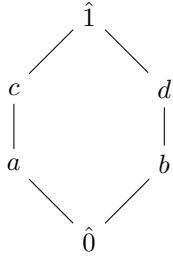

It would be nice if all atomic transversals had the correct support size when using a partition induced by a multichain since then we could remove this assumption from the previous theorem.  Unfortunately this does not always occur. To see why, consider the lattice in Figure~\ref{nonSemiLatticeFig}. The left-most saturated $\hat{0}$--$\hat{1}$ chain
induces the ordered partition
$$
(\{a\},\{b\}).
$$
It is easy to see that the support size of the transversal with both elements is not the rank of their join. Note, however, that if we had the relation $a<d$, then the support size would be the rank of the join. Moreover, note that this would also make the lattice semimodular. We see in the next lemma that semimodularity always implies transversals induced by a multichain have the correct support size.

\begin{lem}\label{chainIndSet} 
Let $L$ be a semimodular lattice and let $(A_1, A_2, \dots, A_n)$ be induced by the multichain $C: \hat{0} =x_0 \leq x_1 \leq  x_2  \leq  \dots\leq x_n=\hat{1}$. If $\sim$ is the standard equivalence, then for all $x\in L$ we have that $\bfm{t}\in \mathcal{T}_x^A$ implies  
$$
|\supp\bfm{t}| = \rho(x).
$$ 
\end{lem}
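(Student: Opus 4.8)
The plan is to show that for an atomic transversal $\bfm{t}=(t_1,\dots,t_n)\in\mathcal{T}_x^A$, the number of nonzero coordinates equals $\rho(x)=\rho(\bigvee\bfm{t})$. Since each nonzero $t_i$ is an atom of $L$ and lies below $x_i$ but not below $x_{i-1}$ (by the definition of the partition induced by $C$), the supports of the $t_i$ are ``spread out'' along the multichain. First I would set up the obvious inequality: in any lattice, joining one atom at a time can raise the rank by at most one, so $\rho(\bigvee\bfm{t})\le|\supp\bfm{t}|$, and this direction needs no hypothesis. The work is entirely in the reverse inequality $\rho(\bigvee\bfm{t})\ge|\supp\bfm{t}|$, i.e. in showing that each successive atom actually does raise the rank by exactly one.

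The key step is to exploit semimodularity together with the fact that the coordinates respect the multichain. After relabeling, assume the nonzero coordinates are $t_{i_1},\dots,t_{i_k}$ with $i_1<i_2<\dots<i_k$, where $k=|\supp\bfm{t}|$. Set $y_j = t_{i_1}\vee\cdots\vee t_{i_j}$, so $y_0=\hat0$ and $y_k=\bigvee\bfm{t}=x$. I would prove by induction on $j$ that $\rho(y_j)=j$; equivalently, that $y_{j-1}<y_j$ (a strict inequality), since in a semimodular lattice $y_{j-1}\lessdot y_{j-1}\vee a\le y_j$ forces $\rho(y_j)\le\rho(y_{j-1})+1$, and combined with the easy inequality this pins the rank down. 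So it suffices to show $t_{i_j}\not\le y_{j-1}$. Here is where the multichain enters: every atom appearing in $y_{j-1}$ comes from some $A_{i_m}$ with $m<j$, hence lies below $x_{i_m}\le x_{i_{j-1}}\le x_{i_j-1}$, so $y_{j-1}\le x_{i_j-1}$. But $t_{i_j}\in A_{i_j}$ means $t_{i_j}\not\le x_{i_j-1}$, and therefore $t_{i_j}\not\le y_{j-1}$. This gives $y_{j-1}<y_j$, completing the induction and hence $\rho(x)=\rho(y_k)=k=|\supp\bfm{t}|$.

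The one place to be careful — and the main obstacle — is the step using semimodularity to go from ``$y_{j-1}<y_j$ and $y_j$ is obtained from $y_{j-1}$ by joining a single atom'' to ``$\rho(y_j)=\rho(y_{j-1})+1$.'' In a semimodular lattice, for any element $z$ and atom $a$ with $a\not\le z$, one has $z\lessdot z\vee a$: this is the standard consequence of the semimodular covering property ($u\lessdot v$ implies $u\vee w\lessdot v\vee w$ or $u\vee w=v\vee w$), applied to $\hat0\lessdot a$ and $w=z$, where $z\vee a\ne z$ rules out the equality case. Applying this with $z=y_{j-1}$ and $a=t_{i_j}$ gives $y_{j-1}\lessdot y_{j-1}\vee t_{i_j}=y_j$, so $\rho(y_j)=\rho(y_{j-1})+1=j$. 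Thus semimodularity is used exactly once per step, and the multichain structure is what guarantees the non-containment $t_{i_j}\not\le y_{j-1}$ that makes each step a genuine cover.
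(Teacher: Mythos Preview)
Your proof is correct and follows essentially the same approach as the paper's own argument: both exploit the multichain structure to show that each successive atom $t_{i_j}$ is not below the partial join $y_{j-1}$ (since $y_{j-1}\le x_{i_{j-1}}$ while $t_{i_j}\not\le x_{i_j-1}$), and then invoke semimodularity to conclude each join step raises the rank by exactly one. The only cosmetic difference is organizational---the paper inducts by peeling off the coordinate of largest index and bounds $\rho(x)\le k$ and $\rho(x)\ge k$ separately, whereas you build the join up from the bottom and show each step is a cover; the content is the same.
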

\begin{proof}
Given an atomic $\bfm{t} \in \mathcal{T}_x^A$ we induct on $|\supp\bfm{t}|$. If $|\supp\bfm{t}|=0$ the result is obvious.

Now suppose that $|\supp\bfm{t}|=k>0$. Let $i$ be the largest index in $\supp\bfm{t}$. Let $\bfm{s} = \bfm{t}(\hat{0}^i)$, then $|\supp\bfm{s}|=k-1$. Suppose that $\bfm{s}\in \mathcal{T}_y^A$ , then $\rho(y)=k-1$ by induction. Let $j$ be the largest index such that $j \in \supp\bfm{s}$. Then $y=\bigvee \bfm{s}\le x_j$ by definition of $j$ and $t_i\not\leq x_j$ since $i>j$. Thus $x=\bigvee\bfm{t}=(\bigvee\bfm{s})\vee t_i>y$. Therefore $\rho(x)>\rho(y)=k-1$ and so $\rho(x)\geq k$. Since $|\supp\bfm{t}|=k$, $\rho(x) \leq k$ as $L$ is semimodular. We conclude that $\rho(x)=k=|\supp\bfm{t}|$ and so our result holds by induction.
\end{proof}

Let us now consider supersolvable semimodular lattices. We begin with a few definitions.  Given a lattice $L$ and $x,z\in L$, we say $(x,z)$ is a \emph{modular pair} if for all $y\leq z$ we have that 
$$
y \vee (x\wedge z) = (y\vee x) \wedge z.
$$
Moreover, we say a multichain $C: x_0=\hat{0}\leq x_1 \leq \dots x_n=\hat{1}$  is \emph{left-modular} if for all $z\in L$  and all $x_i\in C$, every pair $(x_i,z)$ is modular.

Recall that every supersolvable semimodular lattice contains a saturated $\hat{0}$--$\hat{1}$ left-modular chain. It turns out that saturated $\hat{0}$--$\hat{1}$ left-modular chains satisfy the meet condition as we see in the next lemma.
\begin{lem}\label{leftModGoodChain}
Let $L$ be a lattice. If $C: \hat{0} =x_0 \cover x_1 \cover x_2 < \cdots \cover x_n=\hat{1} $ is a left-modular saturated $\hat{0}$--$\hat{1}$ chain then $C$ satisfies the meet condition.
\end{lem}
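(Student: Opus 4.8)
The plan is to argue by contradiction straight from the definition of a modular pair. Suppose $x\in L$ is atomic, is neither $\hat{0}$ nor an atom, and satisfies $x\le x_i$ but $x\not\le x_{i-1}$, where $i$ is the index occurring in the meet condition; assume for contradiction that $x\wedge x_{i-1}=\hat{0}$. Since $x$ is atomic and nonzero, I can fix an atom $a\le x$. The first small observation is that $a\not\le x_{i-1}$, since otherwise $a\le x\wedge x_{i-1}=\hat{0}$, which is impossible for an atom.

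Next I would use the covering relation $x_{i-1}\cover x_i$ that comes from $C$ being saturated. Because $a\le x\le x_i$ and $x_{i-1}\le x_i$, we get $x_{i-1}<a\vee x_{i-1}\le x_i$, where the left inequality is strict precisely because $a\not\le x_{i-1}$; since $x_{i-1}$ is covered by $x_i$ this forces $a\vee x_{i-1}=x_i$.

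Now I would invoke left-modularity of $C$: the pair $(x_{i-1},x)$ is modular because $x_{i-1}\in C$. Applying the defining identity with $y=a\le x$ gives $a\vee(x_{i-1}\wedge x)=(a\vee x_{i-1})\wedge x$. By our assumption the left-hand side equals $a\vee\hat{0}=a$, while the right-hand side equals $x_i\wedge x=x$ since $x\le x_i$. Hence $x=a$, contradicting that $x$ is not an atom. Therefore $x\wedge x_{i-1}\ne\hat{0}$, which is exactly the meet condition.

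There is no real obstacle here — the argument is a one-shot application of the modular-pair identity together with the covering relation — but the point to be careful about is the bookkeeping of the two slots of a modular pair: the identity must be used with $x_{i-1}$ in the ``left'' position and $x$ as the element whose sub-elements $y$ are quantified over, and one should note that the hypotheses ``$x$ atomic, not an atom'' are used exactly twice: once to produce the atom $a\le x$, and once to derive the contradiction from $x=a$.
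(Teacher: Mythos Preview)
Your proof is correct and follows essentially the same route as the paper's: pick an atom $a\le x$ with $a\not\le x_{i-1}$, use the cover $x_{i-1}\cover x_i$ to get $a\vee x_{i-1}=x_i$, then apply the modular-pair identity for $(x_{i-1},x)$ with $y=a$ to force $a\vee(x_{i-1}\wedge x)=x$. The only cosmetic difference is that you assume $x\wedge x_{i-1}=\hat{0}$ at the outset (so \emph{any} atom $a\le x$ automatically satisfies $a\not\le x_{i-1}$), whereas the paper first invokes atomicity of $x$ to locate an atom $a\le x$ with $a\not\le x_{i-1}$ and only reaches the contradiction at the very end; the underlying computation is identical.
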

\begin{proof}
Let $x\in L$ be atomic and neither an atom nor $\hat{0}$. Let $i$ be such that $x\leq x_{i}$ and $x\not\leq x_{i-1}$. Then we claim that there is some atom $a$ with $a<x$ and $a\not\leq x_{i-1}$. To verify the claim, suppose that no such $a$ existed. Since $x$ is not an atom, it must be that all the atoms below $x$ are also below $x_{i-1}$. However, $x$ being atomic implies that $x=\bigvee A_x$ and so $x\leq x_{i-1}$ which is impossible.

By the claim, $x_{i-1}<a\vee x_{i-1} \leq x_i$. Since $x_{i-1} \cover x_i$ we have that $a\vee x_{i-1}=x_i$. Now $(x_{i-1}, x)$ is a modular pair and $a<x$ so, by the definition of a modular pair,
$$
a\vee (x_{i-1} \wedge x) = (a \vee x_{i-1}) \wedge x=x_i\wedge x=x.
$$
But $a<x$ so $x_{i-1} \wedge x\neq \hat{0}$ and thus $C$ satisfies the meet condition.
\end{proof}

We now get Stanley's Supersolvability Theorem as a corollary of Theorem~\ref{equivPropFactorSemimod}, Lemma~\ref{chainIndSet}, and Lemma~\ref{leftModGoodChain}.
\begin{thm}[Stanley's Supersolvability Theorem~\cite{s:sl}]
Let $L$ be a semimodular lattice with partition of the atoms $(A_1, A_2, \dots, A_n)$ induced by a saturated $\hat{0}$--$\hat{1}$ left-modular chain. Then
\begin{displaymath}
\chi(L,t) = \prod_{i=1}^{n} (t-|A_i|).
\end{displaymath}
\end{thm}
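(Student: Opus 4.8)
The plan is to derive Stanley's Supersolvability Theorem by assembling the three results cited in the statement and verifying that their hypotheses are met. First I would observe that a semimodular lattice with a saturated $\hat{0}$--$\hat{1}$ left-modular chain $C$ is the setting of all three lemmas: the chain $C$ is in particular a $\hat{0}$--$\hat{1}$ multichain, and it induces the ordered partition $(A_1, A_2, \dots, A_n)$ of the atoms exactly as in the statement. So the theorem is a matter of checking that the two hypotheses of Theorem~\ref{equivPropFactorSemimod} hold and then reading off the conclusion.

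Next I would verify the standing hypothesis of Theorem~\ref{equivPropFactorSemimod}, namely that every atomic transversal $\bfm{t}\in\mathcal{T}_y^A$ satisfies $|\supp\bfm{t}|=\rho(y)$. This is precisely Lemma~\ref{chainIndSet}, which applies because $L$ is semimodular and the partition is induced by the multichain $C$. Then I would verify condition (3) of Theorem~\ref{equivPropFactorSemimod}, that $C$ satisfies the meet condition; this is exactly Lemma~\ref{leftModGoodChain}, which applies because $C$ is a left-modular saturated $\hat{0}$--$\hat{1}$ chain. With the standing hypothesis and condition (3) in hand, the equivalence in Theorem~\ref{equivPropFactorSemimod} gives condition (4):
$$
\chi(L,t) = t^{\rho(L)-n}\prod_{i=1}^{n}(t-|A_i|).
$$

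Finally I would dispose of the factor $t^{\rho(L)-n}$. Since $C: \hat{0}=x_0\cover x_1\cover\dots\cover x_n=\hat{1}$ is saturated, its length is $n$, so $\rho(\hat{1})=n$; and since $L$ is ranked with maximum $\hat{1}$, we have $\rho(L)=\rho(\hat{1})=n$. Hence $t^{\rho(L)-n}=t^0=1$, and the formula collapses to $\chi(L,t)=\prod_{i=1}^n(t-|A_i|)$, as claimed. I do not anticipate a genuine obstacle here: the content of the theorem lives entirely in the three lemmas already proved, and the only point requiring a moment's care is the bookkeeping that a saturated $\hat{0}$--$\hat{1}$ chain of a ranked lattice has length equal to $\rho(L)$, which forces the exponent of the extra power of $t$ to vanish. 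One might also remark that this recovers Stanley's original statement because every supersolvable semimodular lattice possesses such a chain, as noted before Lemma~\ref{leftModGoodChain}.
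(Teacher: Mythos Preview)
Your proposal is correct and follows exactly the paper's approach: the paper simply states that Stanley's theorem is a corollary of Theorem~\ref{equivPropFactorSemimod}, Lemma~\ref{chainIndSet}, and Lemma~\ref{leftModGoodChain}, and you have faithfully unpacked that corollary, including the bookkeeping that a saturated $\hat{0}$--$\hat{1}$ chain has length $\rho(L)=n$ so that the extraneous factor $t^{\rho(L)-n}$ disappears.
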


 We can use  Theorem~\ref{equivPropFactorSemimod} to give a converse to Stanley's theorem, showing that in a geometric lattice  if one has factorization of $\chi$ using a partition induced by a saturated $\hat{0}$--$\hat{1}$ chain then the chain must   be left-modular.  We just need a couple of definitions for the proof.
Let $L$ be a geometric lattice.  We say a subset, $S$ of the atom set is a \emph{circuit} if $\rho(\vee S) < |S|$ and for all $T\subsetneq S$ we have that $\rho(\vee T) = |T|$. Moreover, we say a partition of the atoms $(A_1,A_2,\dots,A_n)$ satisfies the \emph{circuit condition} if whenever $y,z\in A_j$ there is an $x\in A_i$  with  $i<j$ such that  $\{x,y,z\}$ is a circuit.

\begin{prop}\label{convStanley}
Let $L$ be a geometric lattice with partition of the atoms $(A_1, A_2, \dots, A_n)$ induced by a saturated $\hat{0}$--$\hat{1}$  chain $C: \hat{0} =x_0 \cover x_1 \cover x_2 < \cdots \cover x_n=\hat{1}$. If 
\begin{displaymath}
\chi(L,t) = \prod_{i=1}^{n} (t-|A_i|)
\end{displaymath}
then $C$ is left-modular or, equivalently, $L$ is supersolvable.
\end{prop}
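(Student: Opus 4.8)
The plan is to route the argument through Theorem~\ref{equivPropFactorSemimod} and then upgrade the resulting combinatorial condition to left-modularity. Since a geometric lattice is semimodular, Lemma~\ref{chainIndSet} guarantees that every atomic transversal has support size equal to the rank of its join, so the standing hypothesis of Theorem~\ref{equivPropFactorSemimod} is automatically satisfied for the partition $(A_1,\dots,A_n)$ induced by $C$. Moreover, because $C$ is a saturated $\hat{0}$--$\hat{1}$ chain we have $\rho(L)=n$, so the assumed factorization $\chi(L,t)=\prod_{i=1}^n(t-|A_i|)$ is precisely statement~(4) of Theorem~\ref{equivPropFactorSemimod}. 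Hence statement~(2) of that theorem holds: every $x\in L$ that is the join of two atoms from a common block $A_j$ admits an index $i$ with $|A_i\cap A_x|=1$.

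Next I would translate statement~(2) into the circuit condition. Fix distinct atoms $y,z\in A_j$ and set $x=y\vee z$; since $L$ is semimodular and $y\ne z$, the flat $x$ has rank $2$. Applying statement~(2) produces an index $i$ with $A_i\cap A_x=\{w\}$. The two atoms $y,z$ already lie in $A_j\cap A_x$, so $i\ne j$; and $x=y\vee z\le x_j$ forces every atom below $x$ into $A_1\cup\dots\cup A_j$, so in fact $i<j$. Now $w,y,z$ are three distinct atoms lying below the rank-$2$ flat $x$, whence they span $x$ and every two of them are independent; that is, $\{w,y,z\}$ is a circuit. Thus the partition induced by $C$ satisfies the circuit condition.

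The heart of the argument is to deduce from the circuit condition that $C$ is left-modular. The key observation is the following rank statement: \emph{for every rank-$2$ flat $\ell$ of $L$ and every index $k$, if no atom below $\ell$ lies below $x_k$, then $\rho(x_k\vee\ell)=k+2$.} To see this, let $j_0$ be the least block index occurring among the atoms below $\ell$; by hypothesis $j_0>k$. Pick an atom $a_0\le\ell$ with $a_0\in A_{j_0}$. If $\rho(x_k\vee\ell)$ were not $k+2$ it would equal $k+1$ (it certainly exceeds $k$, since $\ell\not\le x_k$), and then $x_k\vee a_0$ and $x_k\vee\ell$ would be two flats of the same rank $k+1$ with $x_k\vee a_0\le x_k\vee\ell$, forcing $\ell\le x_k\vee a_0\le x_k\vee x_{j_0}=x_{j_0}$; but then every atom of $\ell$ would lie in a block of index $\le j_0$, hence exactly $j_0$, so the two distinct atoms of $\ell$ would both lie in $A_{j_0}$, and the circuit condition would produce a further atom of $\ell$ in a block of index $<j_0$ --- a contradiction. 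With this observation in hand, for every rank-$2$ flat $\ell$ and every $k$ the modular-pair rank identity $\rho(x_k)+\rho(\ell)=\rho(x_k\vee\ell)+\rho(x_k\wedge\ell)$ holds: the cases $\ell\le x_k$ and ``$\ell\wedge x_k$ is an atom'' are immediate (the latter from semimodularity), and the remaining case $\ell\wedge x_k=\hat{0}$ is exactly the observation. Since in a geometric lattice a flat is a modular element as soon as it forms a modular pair with every rank-$2$ flat (this is standard, and can also be obtained by a standard induction on $\rho(z)$), each $x_k$ is modular, i.e.\ $C$ is left-modular. As recalled before Lemma~\ref{leftModGoodChain}, a saturated $\hat{0}$--$\hat{1}$ chain of modular elements is exactly a witness to supersolvability, which gives the final equivalence.

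The main obstacle is the third paragraph. The translation in the first two paragraphs is a routine passage through Theorem~\ref{equivPropFactorSemimod}, but the implication ``circuit condition $\Rightarrow$ left-modular'' requires two ingredients: first, the structural fact that a line of $L$ cannot have all of its atoms in a single block (together with the elementary bound $y\vee z\le x_{\max(j_y,j_z)}$ for atoms $y,z$ in blocks $A_{j_y},A_{j_z}$), which is what powers the rank observation; and second, the reduction of modularity of $x_k$ to modular pairs with rank-$2$ flats, which although standard for geometric lattices should be either cited carefully or proved by the indicated induction. Once these are in place, the rank computation ruling out $\rho(x_k\vee\ell)=k+1$ is immediate.
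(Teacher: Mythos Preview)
Your first two paragraphs coincide with the paper's proof: apply Lemma~\ref{chainIndSet} to verify the standing hypothesis of Theorem~\ref{equivPropFactorSemimod}, pass from the factorization (statement~(4)) to statement~(2), and then translate~(2) into the circuit condition exactly as you do (the paper makes the same observation that $i<j$ because $y\vee z\le x_j$, and that $\{w,y,z\}$ is a circuit because $\rho(y\vee z)=2$).

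The divergence is in how to finish. The paper does not prove ``circuit condition $\Rightarrow$ $C$ is left-modular'' directly; it simply invokes Bj\"orner--Ziegler \cite[Thm.~2.8]{bz:bccfg}, which states that a geometric lattice is supersolvable if and only if its atom set admits an ordered partition satisfying the circuit condition (and that such a partition is precisely one induced by an $M$-chain). Your third paragraph instead attempts this implication from scratch. Your rank observation---that under the circuit condition no line $\ell$ with $\ell\wedge x_k=\hat 0$ can satisfy $\rho(x_k\vee\ell)=k+1$---is correct and is a nice argument. But the step that carries the weight is the assertion that in a geometric lattice an element is modular as soon as the rank identity holds against every rank-$2$ flat. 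You label this ``standard'' and defer it to ``the indicated induction on $\rho(z)$,'' yet that induction is not routine: knowing the rank identity for all proper subflats of $z$ does not immediately force it for $z$, and the naive inductive step runs into the same difficulty you are trying to resolve. So you have effectively replaced one black box (Bj\"orner--Ziegler) with another (the line criterion for modularity), and the latter is, if anything, harder to source cleanly. If you want a self-contained argument here you should actually write out that reduction, or else cite the Bj\"orner--Ziegler theorem as the paper does.
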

\begin{proof}
 In~\cite[Thm. 2.8]{bz:bccfg}, Bj\"orner and Ziegler show that being supersolvable is equivalent to having a partition of the atom set which satisfies the circuit condition.  So it suffices to show that~(2) of Theorem~\ref{equivPropFactorSemimod} implies that the partition induced by the chain satisfies the circuit condition.  To see why, first note that if condition~(2) of Theorem~\ref{equivPropFactorSemimod}  holds, then whenever $y,z\in A_j$, there is some atom $x\in A_i$, $i\neq j$, with $x<y\vee z$.  It must be the case that $i<j$ since $x\le y\vee z\le x_j$ and the partition was induced by the chain. Moreover, $\{x,y,z\}$ must be a circuit since $x< y\vee z$ so that $\rho(x\vee y\vee z)=\rho(y\vee z)=2$ while sets of atoms of size two or less always have joins whose rank equals the cardinality of the set.

\end{proof}

\section{An Application in Graph Theory}\label{agt}

We will now consider an application of Theorem~\ref{equivPropFactorSemimod} to graph theory. This application was motivated by the computations done in Example~\ref{piExample}. We start with a definition.

\begin{defi}
Let $G$ be a graph with a total ordering of the vertices given by $v_1<v_2<\dots< v_n$. 
Call a subtree of $G$ \emph{increasing} if the vertices along any path starting at its minimum vertex increase in this ordering.
Let $f_k$ be the number of spanning
forests of $G$ with $k$ edges whose components are increasing trees. The \emph{increasing spanning forest generating function} is given by 
\begin{displaymath}
IF(G,t)=\sum_{k=0}^{n-1} (-1)^kf_k t^{n-k}.
\end{displaymath}
\end{defi}

To see what the roots of $IF(G,t)$ are, we will need a partition of the edge set which is given by the ordering on the vertices.

\begin{defi}\label{edgePartDef}
Let $G$ be a graph with a total ordering of the vertices given by $v_1<v_2<\dots< v_n$. Label the edge $v_iv_j$ by $(i,j)$ where $i<j$. The ordered partition $(E_1, E_2, \dots, E_{n-1})$ of the edge set $E(G)$ \emph{induced by the total ordering} is the one with blocks
\begin{displaymath}
E_j = \{(i,j) :\ (i,j)\in E(G)\}.
\end{displaymath}
\end{defi}

It turns out that the sizes of the blocks in the partition are exactly the roots of $IF(G,t)$ as we see in the next theorem.
\begin{thm}\label{incForestThm}
Let $G$ be a graph with the partition $(E_1, E_2,\dots, E_{n})$ induced by the total ordering $v_1<v_2<\dots<v_n$. The increasing spanning forest generating function factors as
$$
IF(G,t)=  \prod_{i=1}^{n} (t-|E_i|).
$$
\end{thm}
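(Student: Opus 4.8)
The plan is to realize $IF(G,t)$ as the characteristic polynomial of a suitable lattice and then apply Theorem~\ref{equivPropFactorSemimod}. The natural candidate is the \emph{bond lattice} (lattice of contractions) $L_G$ of $G$: the elements are the partitions of $V(G)$ each of whose blocks induces a connected subgraph of $G$, ordered by refinement. This is a well-known geometric lattice whose atoms correspond to the edges of $G$ (the atom for edge $v_iv_j$ merges $v_i,v_j$ and leaves all other vertices as singletons), so the edge partition $(E_1,\dots,E_{n-1})$ of Definition~\ref{edgePartDef} is literally a partition of the atom set of $L_G$. The first step is to check that this edge partition is \emph{induced by a multichain} of $L_G$ in the sense of Section~\ref{pic}: take $x_j$ to be the partition whose blocks are the connected components of the graph on vertex set $\{v_1,\dots,v_n\}$ using only edges $v_iv_k$ with $k\le j+1$ (together with isolated vertices); one checks $x_0=\hat0$, $x_{n-1}=\hat1$, the $x_j$ form a multichain, and the block $A_j$ it induces is exactly $E_j$ — an edge $v_iv_k$ with $i<k$ lies below $x_j$ iff $k\le j+1$, so it is in $E_{k-1}$.

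The second step is the support/rank hypothesis of Theorem~\ref{equivPropFactorSemimod}: since $L_G$ is geometric, hence semimodular, Lemma~\ref{chainIndSet} gives this for free once we know the partition is multichain-induced. The third step is to verify condition (2) of that theorem — equivalently, via the $(3)\Rightarrow(1)$ and $(2)\Rightarrow(3)$ equivalences, the meet condition for the chain. Here is where the combinatorics of increasing trees enters. Condition (2) asks: if $x\in L_G$ is the join of two atoms (edges) $y,z$ from the same block $E_j$, must some block meet the atoms below $x$ in exactly one edge? Two edges in $E_j$ are $v_av_{j+1}$ and $v_bv_{j+1}$ with $a,b<j+1$; their join is the partition whose only nontrivial block is $\{v_a,v_b,v_{j+1}\}$, and the atoms below it are exactly the three edges among these vertices that are present in $G$. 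Regardless of whether the edge $v_av_b$ is present, the block $E_{\max(a,b)}$ (or $E_{j}$ if one reindexes carefully) — concretely, letting $c=\max(a,b)$, the block $E_{c}$ — contains at most one of these three edges when $c<j+1$, and one checks it contains exactly one, giving $|E_c\cap A_x|=1$; the case analysis is short. I expect this verification of the meet condition to be the main obstacle, since it is the only place the specific structure of $L_G$ (rather than generalities about geometric lattices) is used, and one must handle both the presence and absence of the short-cutting edge.

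The final step is to identify $\chi(L_G,t)$ with $IF(G,t)$. By Whitney's theorem (or by Theorem~\ref{bigThm}(a) once conditions (1),(2) are verified), the M\"obius values of $L_G$ are computed by atomic transversals, and these are precisely spanning forests of $G$: a transversal picks, in each block $E_j$, an edge into $v_{j+1}$ or nothing, and the condition $|\supp\bfm t|=\rho(x)$ forces the chosen edge set to be acyclic, hence a forest, with the additional structure that each vertex $v_{j+1}$ has at most one chosen edge going ``down'' to a smaller vertex — which is exactly the defining property of a forest whose components are increasing trees rooted at their minima. Summing $\mu(x)=(-1)^{\rho(x)}|\mathcal T_x^A|$ over $x\in L_G$ groups these forests by edge count and rank $\rho(L_G)=n-1$, yielding $\chi(L_G,t)=\sum_k(-1)^k f_k\, t^{n-1-(n-1-k)}\cdots$; after matching degrees one gets $\chi(L_G,t)=t^{-1}\,IF(G,t)\cdot t = \cdots$ — more precisely, since a transversal of support $k$ corresponds to a forest with $k$ edges and $\rho(L_G)-n+1 = (n-c_G)-(n-1)$ may be nonzero if $G$ is disconnected, one should either assume $G$ connected or carry the factor $t^{\rho(L)-n}$ through; in the connected case $\rho(L_G)=n-1$ and $IF(G,t)=\chi(L_G,t)$, and in general one uses that $IF$ and $\chi$ both multiply over components. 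Then Theorem~\ref{equivPropFactorSemimod}, condition (4), gives $IF(G,t)=t^{\rho(L_G)-(n-1)}\prod_{i=1}^{n-1}(t-|E_i|)=\prod_{i=1}^{n-1}(t-|E_i|)$, which is the claim (noting $E_n=\emptyset$ so the index range is harmless).
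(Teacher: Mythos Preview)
Your approach has a genuine gap in the third step, and it is not a minor one: condition~(2) of Theorem~\ref{equivPropFactorSemimod} simply does \emph{not} hold for an arbitrary graph and vertex ordering. Take two edges $v_a v_{j+1}$ and $v_b v_{j+1}$ in the same block, say with $a<b<j+1$, and suppose the edge $v_a v_b$ is \emph{not} in $G$. Then the join $x$ has nontrivial block $\{v_a,v_b,v_{j+1}\}$, and the atoms below $x$ are exactly the two edges you started with, both in the same block. Every other block meets $A_x$ in zero atoms, so there is no index $i$ with $|E_i\cap A_x|=1$. Your sentence ``regardless of whether the edge $v_av_b$ is present \dots\ one checks it contains exactly one'' is false precisely when that edge is absent.

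This error is not repairable within your framework, because the conclusion you would draw from Theorem~\ref{equivPropFactorSemimod} is that $\chi(L_G,t)=t^{-1}\prod_i(t-|E_i|)$, and $\chi(L_G,t)$ is (up to a power of $t$) the chromatic polynomial $P(G,t)$, which in general does \emph{not} factor this way. Indeed, the paper's very next theorem (Theorem~\ref{incForestThm2}) shows that $P(G,t)=IF(G,t)$ holds if and only if the ordering is a perfect elimination ordering; your argument, if it worked, would prove that every ordering is a perfect elimination ordering. Relatedly, your ``final step'' identification of $\chi(L_G,t)$ with $IF(G,t)$ is also wrong in general: the formula $\mu(x)=(-1)^{\rho(x)}|\mathcal T_x^A|$ from Theorem~\ref{bigThm}(a) is only valid once condition~(2) has been verified, so you cannot use it to make the identification.

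The paper's proof of Theorem~\ref{incForestThm} avoids the bond lattice entirely. It argues directly that a subset $F\subseteq E(G)$ is an increasing spanning forest with $k$ edges if and only if $F$ is a transversal of $(E_1,\dots,E_n)$ of support size $k$ (the key observation being that a cycle, or a non-increasing path, would force two edges sharing the same larger endpoint). Hence $f_k=e_k(|E_1|,\dots,|E_n|)$ and the generating function factors immediately. The bond lattice and Theorem~\ref{equivPropFactorSemimod} enter only in the proof of Theorem~\ref{incForestThm2}, where the failure you overlooked is exactly what characterizes non-perfect-elimination orderings.
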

\begin{proof} We shall refer to tuples where each element of the tuple is from a different $E_i$ as a transversal (even though there is no underlying poset) and use the term support just as we did previously. We first show that there is a bijection between the set of transversals with support size $k$ for the partition $(E_1,E_2, \dots, E_{n})$ and the set of increasing spanning forests with $k$ edges.

Let $\mathcal{T}_k$ be the set of transversals with support size $k$ and $IF_k$ be the set of increasing spanning forest with $k$ edges. Let $\varphi: \mathcal{T}_k \rightarrow IF_k$ be defined by $\varphi((a_1, b_1), (a_2, b_2), \dots, (a_k,b_k))=F$, where $F$ is the subgraph of $G$ with edges $(a_1, b_1), (a_2, b_2), \dots, (a_k,b_k)$.

It is clear that $F$ has $k$ edges and we claim that $F$ is an increasing spanning forest. The proof that $F$ is acyclic is the same as the one used in Example~\ref{piExample}. Let $T$ be a tree in $F$ which is not increasing. Then in $T$ there must be a vertex $v_m$ which is on a path from the root of $T$ which is preceded and succeeded by vertices of smaller index, $v_a$ and $v_b$. However, this is impossible for the same reasons which force $F$ to be acyclic. It follows that $F$ is an increasing forest and so $\varphi$ is well-defined.

To show $\varphi$ is a bijection, we show it has an inverse. Let $\psi: IF_k \rightarrow \mathcal{T}_k$ be defined by sending the increasing spanning forest to the set of edges it contains. It is obvious that $\varphi$ and $\psi$ are inverses of each other as long as $\psi$ is well-defined.

If $\psi(F)$ is not a transversal, we must have $v_a v_m,v_b v_m\in E(F)$ for some $v_a<v_b<v_m$. Let $T$ be the tree of $F$ containing these edges and let $v_r$ be the minimum vertex of $T$. Since $T$ is increasing and $v_av_m\in E(T)$ with $v_a<v_m$, the unique path from $v_r$ to $v_m$ must contain $v_a$ just prior to $v_m$. By the same token, this path must contain $v_b$ just prior to $v_m$. This is a contradiction, and we conclude that $\psi$ is well-defined.

From above we know that the number of increasing forests with $k$ edges is the same as the number of transversals for the partition $(E_1,E_2,\dots, E_{n})$ with support size $k$. The number of such transversals is $e_k(E_1, E_2, \dots, E_{n})$. Thus we get, 
$$
IF(G,t)= \sum_{k=0}^{n 1} (-1)^kf_k t^{n-k } = \sum_{k=0}^{n} (-1)^ke_k(E_1, E_2, \dots, E_{n })t^{n-k}
$$
from which the result follows.
\end{proof}

Now that we know that the increasing spanning forest generating function always factors, we can use the bond lattice of the graph to show how it relates to the chromatic polynomial. To describe the bond lattice of a graph we need a definition. A \emph{flat}, $F$, of $G$ is a spanning subgraph such that each connected component of $F$ is induced in $G$.   If we then order the flats  of $G$ by inclusion, we obtain the \emph{bond lattice} of $G$.

\begin{thm}\label{incForestThm2}
Denoting the chromatic polynomial of $G$ by $P(G,t)$ we have that
$$
P(G,t)=IF(G,t) 
$$
if and only if $v_1<v_2<\dots< v_n$ is a perfect elimination ordering, i.e., for each $i$, the neighbors of $v_i$ coming before $v_i$ in the ordering form a clique of $G$.
\end{thm}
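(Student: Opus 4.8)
The plan is to recognize $P(G,t)$, up to a power of $t$, as the characteristic polynomial of the bond lattice $L_G$ of $G$, and then to read off the conclusion from Theorem~\ref{equivPropFactorSemimod} applied to the edge partition $(E_1,\dots,E_n)$. First I would collect the facts we need about $L_G$: it is the lattice of flats of the cycle matroid of $G$, hence geometric and in particular semimodular; its atoms are in bijection with $E(G)$, the atom of $v_iv_j$ being the flat whose only nonsingleton component is the edge $v_iv_j$; it is ranked with $\rho(L_G)=n-c(G)$, where $c(G)$ is the number of connected components of $G$; and, expressing the chromatic polynomial through the M\"obius function of the bond lattice, one has $P(G,t)=t^{c(G)}\chi(L_G,t)$.

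Next I would exhibit the multichain that induces the edge partition. For $0\le j\le n$, let $x_j\in L_G$ be the flat whose nonsingleton components are exactly the connected components of the induced subgraph $G[\{v_1,\dots,v_j\}]$. Then $x_0=\hat0$, $x_n=\hat1$, and $x_{j-1}\le x_j$, so $C:\hat0=x_0\le x_1\le\dots\le x_n=\hat1$ is a $\hat0$--$\hat1$ multichain of $L_G$; moreover the atom $v_pv_q$ (with $p<q$) lies below $x_j$ precisely when $q\le j$, so the atom partition induced by $C$ is exactly $(E_1,E_2,\dots,E_n)$, where $E_1=\emptyset$ is permitted as an empty block contributing the trivial factor $t$. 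Because $L_G$ is semimodular, Lemma~\ref{chainIndSet} guarantees that every atomic transversal $\bfm t\in\mathcal T_y^A$ has $|\supp\bfm t|=\rho(y)$, so the standing hypothesis of Theorem~\ref{equivPropFactorSemimod} is satisfied and its conditions (1)--(4) are all equivalent.

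Then I would translate the two ends of that equivalence into the desired statement. For condition (4): since the partition has $n$ blocks and $\rho(L_G)=n-c(G)$, condition (4) reads $\chi(L_G,t)=t^{-c(G)}\prod_{i=1}^n(t-|E_i|)$; by Theorem~\ref{incForestThm} we have $\prod_{i=1}^n(t-|E_i|)=IF(G,t)$, so multiplying by $t^{c(G)}$ and using $P(G,t)=t^{c(G)}\chi(L_G,t)$ shows that condition (4) is equivalent to $P(G,t)=IF(G,t)$. For condition (2): an element of $L_G$ that is the join of two distinct atoms from one block $E_j$ has the form $x=(v_av_j)\vee(v_bv_j)$ with $a<b<j$ and $v_av_j,v_bv_j\in E(G)$, and computing this join as the closure of $\{v_av_j,v_bv_j\}$ in the cycle matroid shows that its set $A_x$ of atoms below $x$ equals $\{v_av_j,v_bv_j\}$ if $v_av_b\notin E(G)$ and $\{v_av_j,v_bv_j,v_av_b\}$ if $v_av_b\in E(G)$. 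In the first case no block meets $A_x$ in exactly one atom, while in the second the block $E_b$ does; hence condition (2) holds if and only if $v_av_b\in E(G)$ for every triple $a<b<j$ with $v_a$ and $v_b$ both neighbors of $v_j$, which is exactly the assertion that, for each $j$, the neighbors of $v_j$ preceding it form a clique, i.e., that $v_1<v_2<\dots<v_n$ is a perfect elimination ordering.

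Chaining these together gives $P(G,t)=IF(G,t)\iff(4)\iff(2)\iff v_1<\dots<v_n$ is a perfect elimination ordering, which is the theorem. I expect the main obstacle to be the third step: the closure computation in the cycle matroid, where one must check carefully that the only edge of $G$ beyond $v_av_j$ and $v_bv_j$ that can enter the flat $(v_av_j)\vee(v_bv_j)$ is $v_av_b$, and that it does so exactly when it is an edge of $G$, together with keeping the power $t^{c(G)}$ and the block count bookkeeping precise enough that condition (4) matches $IF(G,t)$ on the nose rather than a power of $t$ times it. The remaining verifications---that $L_G$ is semimodular, that $C$ is a multichain inducing $(E_1,\dots,E_n)$, and that Lemma~\ref{chainIndSet} applies---are routine.
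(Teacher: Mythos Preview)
Your proposal is correct and follows essentially the same route as the paper: realize the edge partition $(E_1,\dots,E_n)$ as the atom partition of the bond lattice induced by a $\hat0$--$\hat1$ multichain, invoke semimodularity via Lemma~\ref{chainIndSet} to satisfy the standing hypothesis of Theorem~\ref{equivPropFactorSemimod}, and then match condition~(4) with $P(G,t)=IF(G,t)$ and condition~(2) with the perfect elimination property through the computation of $A_{(v_av_j)\vee(v_bv_j)}$. The only cosmetic difference is that the paper first reduces to connected $G$ (so that $\rho(L_G)=n-1$ and the extraneous power of $t$ is $t^{-1}$), whereas you carry the factor $t^{c(G)}$ throughout; your multichain $x_j$ is the same as the paper's $\bigvee_{i\le j}E_i$.
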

\begin{proof}
First we note that both $P(G,t)$ and $IF(G,t)$ are multiplicative over connected components of a graph. Additionally, any ordering of the vertices can be restricted to the connected components of a graph and the ordering will be a perfect elimination ordering of the entire graph if and only if its a perfect elimination ordering of each connected component. Therefore it is sufficient to show the result assuming that our graph is connected.

By the previous theorem, $P(G,t)=IF(G,t)$ if and only if 
$$
P(G,t) = \prod_{i=1}^{n} (t-|E_i|)
$$ 
where $(E_1,E_2,\dots,E_{n})$ is induced by the total ordering. This, in turn, is equivalent to
\begin{equation}\label{incPolyEq}
\chi(L,t) = t^{-1}\prod_{i=1}^{n} (t-|E_i|),
\end{equation}
where $L$ is the bond lattice of $G$. Note that we have a $t^{-1}$ on the outside of the previous product.  This is because the rank of  $L$ is $n-1$, whereas we have $n$ blocks in our partition of $E(G)$.  We will still have a polynomial, however, since $E_1=\emptyset$ for any graph.

The partition of the edge set of $G$ gives a partition of the atoms of $L$. Moreover, we claim this partition is induced by a $\hat{0}$--$\hat{1}$ multichain. To verify the claim, we use the multichain $C: \hat{0}=x_0\leq x _1\leq \dots\leq x_{n}=\hat{1}$ where
$$
x_j = \bigvee_{i=1}^j E_{i}.
$$
Note that it is possible that $x_j=x_{j+1}$ since blocks in the partition of the atoms can be empty. 

It is obvious that if $e\in E_k$ then $e\leq x_k$. We must show that $e\not\leq x_{k-1}$. In the graph $x_{k-1}$, all vertices with a label larger than $k$ have degree 0. Since $e=(i, k)$ for some $i$, we have $e\not\leq x_{k-1}$. It follows that $C$ induces the partition $(E_1,E_2, \dots, E_{n})$.

Since the partition is induced by a multichain and $L$ is semimodular, we can apply Theorem~\ref{equivPropFactorSemimod} and Lemma~\ref{chainIndSet}. In particular, using the equivalence of (2) and (4), we have that equation~\ree{incPolyEq} holds
if and only if for any pair $(a,i), (b,i)\in E_{i}$ with $a<b$, there is some index $j$ with a unique atom below $(a,i) \vee (b,i)$ in $E_j$. Since $L$ is the bond lattice of a graph, the only new atom below $(a,i) \vee (b,i)$ is $(a,b)$. It follows that equation~\ree{incPolyEq} holds
if and only if whenever $(a,i), (b,i)\in E(G)$ then $(a,b)\in E(G)$. This is exactly the criteria for $v_1, v_2,\dots, v_n$ to be a perfect elimination ordering of $G$.
\end{proof}

\section{Open Questions and Future Work}

One can weaken the condition of a poset being ranked and still define a characteristic polynomial. In an upcoming article~\cite{h:aqp}, the first author will show that many of the results found in this paper are true when the restriction of being ranked is dropped. Of course, in this case we need a new definition of a ``rank" function. One example, originally defined in~\cite{bs:mfl}, is called \emph{generalized rank}.  We will use it to show that a new family of lattices have characteristic polynomials which factor  with nonnegative integer roots.  In particular, we will show that every interval of a crosscut-simplicial lattice (see~\cite{m:csl} for definition) has such a factorization.  A special case of this result is that every interval of the  m-Tamari lattices (see~\cite{bp:htdhgtp} for definition) has a characteristic polynomial with nice factorization. These results will allow us to recover Blass and Sagan's original result~\cite{bs:mfl}  that the characteristic polynomial of the standard Tamari lattice factors with nonnegative integer roots.  Moreover, we will use a slight modification of Theorem~\ref{bigThm} to show  Blass and Sagan's~\cite{bs:mfl} result  regarding LL lattices.

Additionally, in~\cite{h:aqp}, another use of quotient posets will be demonstrated. Some classic results about the M\"obius function can be proved using induction and quotients. For example, one can use this technique to prove Hall's Theorem~\cite{h:efg}, Rota's Crosscut Theorem~\cite{r:ofct} and Weisner's Theorem~\cite{w:atifs}.

We are investigating whether the methods developed in this paper could be used to show the factorization theorem of Saito~\cite{s:tldflvf} and Terao~\cite{t:gefahstbf} about free arrangements. This would give a combinatorial interpretation of the algebraic property of freeness. 

Another question is whether one can discover a systematic way to find the atom partition for posets which are not lattices. One such example is the weighted partition poset which was introduced in~\cite{dk:cfopcbbho}.

As was shown in Proposition~\ref{convStanley}, the only multichains in a geometric lattice which can satisfy the meet condition are left-modular. This raises the question: what types of multichains can satisfy the meet condition? We know, by Lemma~\ref{leftModGoodChain}, that saturated left-modular chains (in any lattice, not necessarily geometric) do satisfy the meet condition but we do not have any other such families.

It would be very interesting to connect our work with the topology of the order complex of a poset. As a first step, we have been trying to see whether shellability results can be obtained using induction and quotients since this method has already borne fruit as mentioned above. One could also hope to find connections with discrete Morse theory using these ideas.

Finally, we gave a definition of an increasing spanning forest and showed that its generating function always factors. This raises the question of whether our theorem is a special case of a more general result about the Tutte polynomial of a matroid. Of course, we would first need a definition of what it means for an independent set of a matroid to be increasing.  
 In joint work with  Martin, we have succeeded in generalizing the results of the previous section to arbitrary simplicial complexes.

\section{Acknowledgments}
The authors would like to thank  Torsten Hoge  for pointing out the connection of Theorem~\ref{bigThmAtomicVer} with Terao's theorem  about   nice partitions in central hyperplane arrangements.  They would also like to thank the two anonymous referees for their helpful suggestions.

 \newcommand{\noop}[1]{} \def\cprime{$'$}

\end{document}